\newtheorem{thm}{Theorem}
\newtheorem{cor}{Corollary}
\newtheorem{lemma}[thm]{Lemma}
\newtheorem{prop}{Proposition}
\DeclareMathOperator{\F}{\mathbb{F}}
\DeclareMathOperator{\Tr}{Tr}
\begin{document}
		\baselineskip=16.3pt
		\parskip=14pt
		\begin{center}
			\section*{Number of Rational points of the Generalized Hermitian Curves over $\mathbb F_{p^n}$}
			{\large 
		
			Emrah Sercan Y{\i}lmaz \footnote {Research supported by Science Foundation Ireland Grant 13/IA/1914} 
			\\
			School of Mathematics and Statistics\\
			University College Dublin\\
			Ireland}

		\end{center}
		
		\subsection*{Abstract}
In this paper we consider the curves $H_{k,t}^{(p)} : y^{p^k}+y=x^{p^{kt}+1}$ over $\mathbb F_p$ and and find an exact formula for the number of $\F_{p^n}$-rational points
on $H_{k,t}^{(p)}$ for all integers $n\ge 1$. We also give the condition when the $L$-polynomial of a Hermitian curve divides the $L$-polynomial of another over $\F_p$.

\textbf{Keywords:}  Maximal curves, Hermitian curves, Point counting, L-poynomials
\section{Introduction}
Let $q$ be a prime power and $\mathbb{F}_q$ be the finite field with $q$ elements.
Let $X$ be a projective smooth absolutely irreducible curve of genus $g$ 
defined over $\mathbb{F}_q$.
Consider the $L$-polynomial of the curve $X$ over $\mathbb F_{q}$, defined by
$$L_{X/\mathbb{F}_q}(T)=L_X(T)=\exp\left( \sum_{n=1}^\infty ( \#X(\mathbb F_{q^n}) - q^n - 1 )\frac{T^n}{n}  \right).$$
where $\#X(\mathbb F_{q^n})$ denotes the number of $\mathbb F_{q^n}$-rational points of $X$. 
It is well known that $L_X(T)$ is a polynomial of degree $2g$ with integer coefficients, so we write it as 
\begin{equation} \label{L-poly}
L_X(T)= \sum_{i=0}^{2g} c_i T^i, \ c_i \in \mathbb Z.
\end{equation}
It is also well known that $c_0=1$ and $c_{2g}=q^g$.

Let $k$ be a positive integer and $t$ be a nonnegative integer. In this paper we will study 
the curves $$H_{k,t}^{(p)} :y^{p^k}+y=x^{p^{kt}+1}$$ which are defined over $\mathbb F_p$. The curve $H_{k,t}^{(p)}$ has genus $p^{kt}(p^{k}-1)/2$ except for $(p,t)=(2,0)$, and if $(p,t)=(2,0)$, the genus of $H_{k,t}^{(p)}$ is $0$.

In this paper we will prove the following theorem which gives the number of rational points over all extension $\F_p$.

\begin{thm}\label{thm-Hkt-point}
	Let $k=2^vw$ where $w$ is an odd integer. Let $n$ be a positive integer with $d=(n,4kt)$ and $c=(n,4k)$. Then $$-p^{-n/2}[\#H_{k,t}(\mathbb F_{p^n})-(p^n+1)]=
	\begin{cases}
	0 &\text{ if }\ 2^{v+1} \nmid d \text{ and } d \mid kt,\\ 
	\epsilon\cdot(p^{c/2}-1) &\text{ if }\ 2^{v+1} \mid\mid d \text{ and } d \mid kt,\\ 
	\epsilon\cdot(p^{c/4}-1) &\text{ if }\ 2^{v+2} \mid d \text{ and } d \mid kt,\\
	p^{c}-1 &\text{ if }\ d\nmid kt, \frac d2 \mid kt \text{ and $t$ is even },\\
	-(p^{c}-1)(p^{d/2}-1) &\text{ if }\ d\nmid kt, \frac d2 \mid kt \text{ and $t$ is odd },\\
	(p^{c}-1)(p^{d/4}-1) &\text{ if }\ d\nmid 2kt, d \mid 4kt
	\end{cases}$$  
where $\epsilon=0$ if $p=2$ and $\epsilon=1$ if $p$ is odd. 
\end{thm}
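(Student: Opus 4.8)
The plan is to turn the point count into a sum of Weil sums over the power map $x\mapsto x^{p^{kt}+1}$, to recognise each Weil sum as the Gauss sum of an explicit quadratic form over $\F_p$, and then to compute the ranks and types of these forms in terms of the $2$-adic valuations of $n$, $k$, $t$. Concretely, $L(y)=y^{p^k}+y$ is $\F_p$-linear on $\F_{p^n}$ with kernel $V:=\{a\in\F_{p^n}:a^{p^{n-k}}+a=0\}$, and since its adjoint with respect to the pairing $(a,b)\mapsto\Tr_{\F_{p^n}/\F_p}(ab)$ is $a\mapsto a^{p^{n-k}}+a$ (using $\Tr(ay^{p^k})=\Tr(a^{p^{n-k}}y)$), the image of $L$ is $V^{\perp}$. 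Counting the affine points fibrewise over $x$, adding the point at infinity, and applying additive-character orthogonality gives
$$\#H_{k,t}(\F_{p^n})-(p^n+1)=\sum_{a\in V\setminus\{0\}}S(a),\qquad S(a):=\sum_{x\in\F_{p^n}}\psi\bigl(\Tr(a\,x^{p^{kt}+1})\bigr),$$
with $\psi$ a fixed nontrivial additive character of $\F_p$. One also records the structure of $V$: for $p=2$ it is $\F_{2^{(n,k)}}$, while for odd $p$ it is an $\F_p$-subspace of dimension $(n,k)$ when $2^{v+1}\mid n$ and is $\{0\}$ otherwise (according to whether $-1$ is a $(p^{n-k}-1)$-th power in $\F_{p^n}^{*}$).

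Next, each $Q_a(x):=\Tr(a\,x^{p^{kt}+1})$ is a quadratic form on the $n$-dimensional $\F_p$-space $\F_{p^n}$, with polar form $B_a(x,z)=\Tr\bigl(x(a z^{p^{kt}}+a^{p^{n-kt}}z^{p^{n-kt}})\bigr)$; hence its radical is $R_a=\{0\}\cup\{z\in\F_{p^n}^{*}:z^{p^{2kt}-1}=-a^{1-p^{kt}}\}$ and $\dim_{\F_p}R_a\in\{0,(n,2kt)\}$, the larger value occurring exactly when $-a^{1-p^{kt}}$ lies in the group of $(p^{2kt}-1)$-th powers of $\F_{p^n}^{*}$. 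The standard evaluation of Gauss sums of quadratic forms then applies: for odd $p$ one automatically has $Q_a\equiv0$ on $R_a$ (since $Q_a(z)=\frac{1}{2}B_a(z,z)$), so $|S(a)|=p^{(n+\dim R_a)/2}$ with sign a fourth root of unity fixed by the discriminant of the nondegenerate form induced on $\F_{p^n}/R_a$; for $p=2$, $Q_a$ restricts to an $\F_2$-linear functional on $R_a$, and $S(a)=0$ when that functional is nonzero while $S(a)=\pm\,2^{(n+\dim R_a)/2}$ otherwise, with sign given by the Arf invariant of the induced form. This vanishing at $p=2$ in the presence of a nontrivial radical is the origin of the factor $\epsilon$ in the first three cases.

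Finally, for $a\in V\setminus\{0\}$ one uses the relation $a^{p^{n-k}}=-a$ to translate (a) the power-residue criterion governing $\dim R_a$, (b) at $p=2$ the (non)vanishing of $Q_a$ on $R_a$, and (c) the discriminant, resp.\ Arf invariant, of the induced form into elementary $2$-adic statements about $n$, $4k$, $4kt$ and $kt$; the relevant $v_2$-comparisons amount precisely to the position of $v_2(d)$ relative to $v$ — whether $2^{v+1}\nmid d$, $2^{v+1}\mid\mid d$ or $2^{v+2}\mid d$ — together with the parity of $t$ and the three regimes $d\mid kt$, ($d\nmid kt$ and $\frac d2\mid kt$), ($d\nmid 2kt$ and $d\mid 4kt$), which are exactly the three families of cases. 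One then partitions $V\setminus\{0\}$ by the invariants $(\dim R_a,\ \text{type})$ — the locus on which $-a^{1-p^{kt}}$ is the required power being a coset, so that the class sizes come out as $p^{c/2}-1$, $p^{c/4}-1$ or $p^{c}-1$ — sums the contributions $\pm p^{(n+\dim R_a)/2}$, and divides by $-p^{n/2}$ to obtain the six displayed expressions. I expect step (c) to be the main obstacle: pinning down the discriminant (odd $p$) and Arf invariant ($p=2$) of the $Q_a$ uniformly over the three families, since this fixes the signs, the factor $\epsilon$, and the secondary factors $p^{d/2}-1$ and $p^{d/4}-1$ in cases 4--6, and the fourth-root-of-unity phases for $p\equiv3\pmod4$ must be controlled simultaneously with the $p=2$ bookkeeping; the rank computations (a), (b) and the counting are comparatively routine. (Alternatively one could quote known evaluations of the Weil sums $\sum_x\psi(\Tr(a\,x^{p^{kt}+1}))$, but the quadratic-form computation is self-contained and keeps the dependence on $a$ explicit.)
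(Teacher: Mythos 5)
Your route is genuinely different from the paper's. The paper first shows that $H_{k,t}$ is maximal or minimal over $\F_{p^{2kt}}$ and minimal over $\F_{p^{4kt}}$ (Lemmas \ref{lemma-Hk-max-min-odd} and \ref{lemma-Hk-max-min-even}), reduces the count over each subfield $\F_{p^m}$ with $m\mid 4kt$ to a curve of the same family by Frobenius substitutions (Lemmas \ref{lem-divides-k}, \ref{lem-divides-2k}, \ref{lem-divides-4k}), and then passes from divisors of $4kt$ to arbitrary $n$ by quoting the external period-reduction theorem (Theorem \ref{reduction-thm}). You instead evaluate $\#H_{k,t}(\F_{p^n})-(p^n+1)=\sum_{a\in V\setminus\{0\}}S(a)$ directly for every $n$. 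Your reduction to Gauss sums, your identification of $V$ and of the radical $R_a$, and the general shape $S(a)\in\{0,\pm p^{(n+\dim R_a)/2}\}$ are all correct; in effect you would be re-deriving, self-containedly, the special case of Theorem \ref{reduction-thm} that the paper imports.

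The gap is that the step you yourself flag as ``the main obstacle'' --- determining, for each $a\in V\setminus\{0\}$ and each $n$, the discriminant of the induced nondegenerate form (odd $p$), respectively the Arf invariant and the vanishing of $Q_a$ on $R_a$ ($p=2$) --- is exactly where all six cases of the theorem are decided, and it is not carried out: without the signs none of the displayed expressions is established. There is also an internal inconsistency in your final assembly: you assert that the classes in $V\setminus\{0\}$ have sizes $p^{c/2}-1$, $p^{c/4}-1$ or $p^{c}-1$, but by your own computation $|V\setminus\{0\}|=p^{(n,k)}-1$ (and, for odd $p$, $V$ is nontrivial only when $2^{v+1}\mid n$), which equals $p^{c/2}-1$ or $p^{c/4}-1$ but never $p^{c}-1$ with $c=(n,4k)$. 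Hence in cases 4--6 the products $(p^{c}-1)(p^{d/2}-1)$ and $(p^{c}-1)(p^{d/4}-1)$ cannot arise as a class size times a single normalized value $p^{\dim R_a/2}$, and your plan does not explain what cancellation pattern would produce them. Indeed a magnitude check shows they cannot be produced at all: for $p=3$, $k=1$, $t=2$, $n=8$ one is in case 6 with $c=4$, $d=8$, so the stated value is $(3^{4}-1)(3^{2}-1)=640$, whereas Lemma \ref{lemma-Hk-max-min-even} (minimality over $\F_{3^{8}}$, genus $9$) forces the left-hand side to equal $18$, and $3^{4}\cdot 640$ exceeds the Hasse--Weil bound $2g\cdot 3^{4}=1458$. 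So the target you are matching against in those cases is itself misprinted; the quantity your framework naturally yields is $(p^{(n,k)}-1)p^{\dim R_a/2}$-type expressions, consistent with $18=(3^{(8,1)}-1)\cdot 3^{d/4}$ in the example. To finish along your lines you would need the full explicit evaluation, signs included, of the Weil sums $\sum_{x}\psi(\Tr(ax^{p^{kt}+1}))$, which is known but is the substantive content you have deferred.
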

When $t=1$ this curve is very well known to be a maximal curve over $\F_{p^{2k}}$. We have calculated the number of rational points over all extension of $\F_p$. We will write $H_{k}^{(p)}$ for $H_{k,1}^{(p)}$.
\begin{cor}\label{cor-Hk-point}
	Let $k=2^vw$ where $w$ is an odd integer. Let $n$ be a positive integer with $d=(n,4k)$. Then $$-p^{-n/2}[\#H_k^{(p)}(\mathbb F_{p^n})-(p^n+1)]=
	\begin{cases}
	0 &\text{ if }\ 2^{v+1} \nmid d,\\ 
	-p^{d/2}(p^{d/2}-1) &\text{ if }\ 2^{v+1} \mid\mid d,\\ 
	p^{d/4}(p^{d/4}-1) &\text{ if }\ 2^{v+2} \mid d.
	\end{cases}$$  
\end{cor}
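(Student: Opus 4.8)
The plan is to deduce Corollary~\ref{cor-Hk-point} directly from Theorem~\ref{thm-Hkt-point} by setting $t=1$. The first point to record is that for $t=1$ one has $4kt=4k$, so $d=(n,4kt)$ and $c=(n,4k)$ coincide, i.e.\ $c=d$, and every occurrence of $c$ in the surviving lines may be rewritten as $d$. Moreover $kt=k=2^vw$, and $t=1$ is odd, so the ``$t$ even'' branch of Theorem~\ref{thm-Hkt-point} (the fourth line) cannot occur. It then remains to see which of the remaining five branches can actually arise for a given $n$, and to simplify the corresponding closed form.

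Writing $d=2^{a}m$ with $m$ odd, the condition $d\mid 4k=2^{v+2}w$ forces $m\mid w$ and $0\le a\le v+2$, so $d$ is completely controlled by the pair $(a,m)$. The elementary divisibility facts needed are: $d\mid kt\iff a\le v\iff 2^{v+1}\nmid d$; $\frac d2\mid kt\iff a\le v+1$ (and in the branches where $\frac d2$ appears, $d\nmid kt$ forces $a\ge v+1$, so $d$ is even and $\frac d2$ is an integer); $d\mid 4kt$ holds always, while $d\mid 2kt\iff a\le v+1$. Feeding these into the six cases of Theorem~\ref{thm-Hkt-point}: the second and third lines require both $d\mid kt$ and $2^{v+1}\mid d$ and are therefore vacuous when $t=1$; the first line occurs exactly when $a\le v$, i.e.\ $2^{v+1}\nmid d$, and contributes $0$; the fifth line (the only ``$d\nmid kt,\ \frac d2\mid kt$'' branch compatible with $t$ odd) occurs exactly when $a=v+1$, i.e.\ $2^{v+1}\mid\mid d$; and the sixth line occurs exactly when $a=v+2$, i.e.\ $2^{v+2}\mid d$. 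Since $a$ ranges over $\{0,1,\dots,v+2\}$, these three regimes partition all possibilities, so the three cases of Corollary~\ref{cor-Hk-point} correspond one-to-one with the three surviving cases of the theorem. The only remaining work is to check that the specialisations of the closed forms in lines five and six, with $c$ replaced by $d$, collapse to $-p^{d/2}(p^{d/2}-1)$ and $p^{d/4}(p^{d/4}-1)$ respectively.

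That final simplification is the only step that is not pure bookkeeping, and even it is routine once the arithmetic of $p$-power differences and the genus formula $2g=p^k(p^k-1)$ are in hand. As an independent sanity check one can note that for $n=2k$ the maximality of $H_k^{(p)}$ over $\F_{p^{2k}}$ gives $\#H_k^{(p)}(\F_{p^{2k}})=p^{3k}+1$, whence $-p^{-k}\bigl[\#H_k^{(p)}(\F_{p^{2k}})-(p^{2k}+1)\bigr]=-p^{k}(p^{k}-1)$, which is exactly the middle case of the corollary at $d=(2k,4k)=2k$ (where indeed $2^{v+1}\mid\mid 2k$). I do not anticipate any real obstacle here: all of the substance has already been carried out inside Theorem~\ref{thm-Hkt-point}, and the proof of the corollary is just the case-by-case reduction of its six-way formula to the $t=1$ specialisation.
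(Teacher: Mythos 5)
Your case bookkeeping (which branches of Theorem~\ref{thm-Hkt-point} survive at $t=1$, and that they partition the possibilities according to $2^{v+1}\nmid d$, $2^{v+1}\mid\mid d$, $2^{v+2}\mid d$) is correct and is essentially the intended route, since the paper offers no explicit proof of the corollary beyond specialising the theorem. But the step you defer as ``routine'' is in fact false, and your own sanity check already exposes this. With $t=1$ you have $c=d$, so the fifth line of Theorem~\ref{thm-Hkt-point} reads $-(p^{d}-1)(p^{d/2}-1)$, which does not equal $-p^{d/2}(p^{d/2}-1)$ for any $d\ge 1$; likewise $(p^{d}-1)(p^{d/4}-1)\neq p^{d/4}(p^{d/4}-1)$. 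Concretely, at $n=2k$ (so $c=d=2k$) the theorem's fifth line gives $-(p^{2k}-1)(p^{k}-1)$, whereas maximality of $H_k$ over $\F_{p^{2k}}$ --- which you correctly computed --- forces $-p^{k}(p^{k}-1)$. No algebraic ``collapse'' can bridge this, and invoking the genus formula cannot help a pure substitution: Theorem~\ref{thm-Hkt-point} as printed does not imply Corollary~\ref{cor-Hk-point}.

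What the lemmas of Sections~4--5 actually produce is that the last two lines of Theorem~\ref{thm-Hkt-point} should be $-p^{d/2}(p^{c/2}-1)$ and $p^{d/4}(p^{c/4}-1)$: for instance, Lemma~\ref{lem-divides-2k} reduces the count over $\F_{p^{2d'}}$ to that of $H_{c',d'/c'}$, whose maximality over that field (Lemma~\ref{lemma-Hk-max-min-odd}) yields $-p^{d'}(p^{c'}-1)$ with $c'=c/2$ and $2d'=d$, not $-(p^{c}-1)(p^{d'}-1)$. With those corrected closed forms your $t=1$ specialisation does give exactly the three displayed values. So to make the argument sound you must either first correct (or re-derive) those two lines of the theorem, or bypass it entirely and prove the corollary directly from Lemma~\ref{lemma-Hk-max-min-odd} (maximal over $\F_{p^{2k}}$, minimal over $\F_{p^{4k}}$), Lemmas~\ref{lem-divides-k} and~\ref{lem-divides-k-0} (which give $\#H_k(\F_{p^{d}})=p^{d}+1$ when $d\mid k$), and the reduction Theorem~\ref{reduction-thm}. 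As written, the proposal rests on an identity that fails.
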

We will usually drop the superscript in $H_{k,t}^{(p)}$ and write $H_{k,t}$.
Throughout the paper $\left(\frac{\cdot }{p}\right)$ denotes the Legendre symbol.

In Section 2, we will give some backgrounds which we will use in our proofs. In Section 3, we will give the proofs of two general tools which are useful for proving our results. In Section 4, we will find the $F_{p^n}$-number of rational points of $H_{k,0}$ where $p$ is an odd prime. In Section 5, we will find the $F_{p^n}$-number of rational points of $H_{k,t}$ where $p$ is an odd prime.  In Section 6, we will find the $F_{p^n}$-number of rational points of $H_{k,t}$ where $p=2$.  In Section 7 and 8, we will give some remarks on maximal curves which are related to our previous results. In Section 9, we will prove that in which condition the $L$-polynomial of a Hermitian curve divides the $L$-polynomial of another Hermitian curve over $\mathbb F_p$.
\section{Background}
In this section we will give some basic facts that we will use. 
Some of this requires that $p$ is odd, some of it does not.

\subsection{More on Curves}\label{morec}

Let $X$ be a projective smooth absolutely irreducible curve of genus $g$ 
defined over $\mathbb{F}_q$. Let $\eta_1,\cdots,\eta_{2g}$ be the roots of the 
reciprocal of the $L$-polynomial of $X$ over 
$\mathbb F_{q}$ (sometimes called the Weil numbers of $X$, or
Frobenius eigenvalues). Then, for any $n\geq 1$, 
the number of rational points of $X$ over $\mathbb F_{q^{n}}$ is
given by
\begin{equation}\label{eqn-sum of roots}
\#X(\mathbb F_{q^{n}})=(q^{n}+1)- \sum\limits_{i=1}^{2g}\eta_i^n. 
\end{equation}
The Riemann Hypothesis for curves over finite fields  
states that $|\eta_i|=\sqrt{q}$ for all $i=1,\ldots,2g$. 
It follows immediately from this property and \eqref{eqn-sum of roots}
that
\begin{equation}
|\#X(\mathbb F_{q^n})-(q^n+1)|\leq 2g\sqrt{q^n}
\end{equation}
which is the Hasse-Weil bound.

We call $X(\mathbb F_{q})$ \emph{maximal} if $\eta_i=-\sqrt{q}$ for all $i=1,\cdots,2g$, so the Hasse-Weil upper bound is met. Equivalently, $X(\mathbb F_{q})$ is maximal
if and only if $L_X(T)=(1+\sqrt{q} T)^{2g}$.

We call $X(\mathbb F_{q})$ \emph{minimal} if $\eta_i=\sqrt{q}$ for all $i=1,\cdots,2g$,
so the Hasse-Weil lower bound is met.
Equivalently, $X(\mathbb F_{q})$ is minimal
if and only if $L_X(T)=(1-\sqrt{q} T)^{2g}$.

Note that if $X(\mathbb F_{q})$ is minimal or maximal then $q$ must be a square 
(i.e.\ $r$ must be even).

The following properties follow immediately.

\begin{prop} \label{minimal-prop}
	\begin{enumerate}
		\item If $X(\mathbb F_{q})$ is maximal then  $X(\mathbb F_{q^{n}})$ is minimal for even $n$ and maximal for odd $n$. 
		\item  If $X(\mathbb F_{q})$ is minimal then  $X(\mathbb F_{q^{n}})$ is minimal for all $n$.
	\end{enumerate}
\end{prop}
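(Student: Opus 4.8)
The plan is to reduce everything to the behaviour of the Weil numbers under field extension. The key fact I would use is that the Weil numbers of $X$ over $\mathbb{F}_{q^n}$ are precisely $\eta_1^n,\dots,\eta_{2g}^n$. This is immediate from \eqref{eqn-sum of roots}: replacing $q$ by $q^n$ and $n$ by $m$ there gives $\#X(\mathbb{F}_{q^{nm}})=(q^n)^m+1-\sum_{i=1}^{2g}(\eta_i^n)^m$ for all $m\ge 1$, which is exactly the defining relation for the Weil numbers of $X$ over $\mathbb{F}_{q^n}$; equivalently $L_{X/\mathbb{F}_{q^n}}(T)=\prod_{i=1}^{2g}(1-\eta_i^n T)$.

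Granting this, part (1) follows by a parity check. If $X(\mathbb{F}_q)$ is maximal then $\eta_i=-\sqrt q$ for every $i$, so the Weil numbers of $X$ over $\mathbb{F}_{q^n}$ are $\eta_i^n=(-\sqrt q)^n=(-1)^n\sqrt{q^n}$. When $n$ is even this equals $\sqrt{q^n}$ for all $i$, so $X(\mathbb{F}_{q^n})$ is minimal; when $n$ is odd it equals $-\sqrt{q^n}$ for all $i$, so $X(\mathbb{F}_{q^n})$ is maximal. Part (2) is the same computation without the sign: if $\eta_i=\sqrt q$ for all $i$, then $\eta_i^n=\sqrt{q^n}$ for all $i$ and all $n$, hence $X(\mathbb{F}_{q^n})$ is minimal for every $n$. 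Phrased through $L$-polynomials, $L_{X/\mathbb{F}_q}(T)=(1\pm\sqrt q\,T)^{2g}$ yields $L_{X/\mathbb{F}_{q^n}}(T)=(1-(\pm\sqrt q)^n T)^{2g}$, and one reads off the claims from the sign of $(\pm\sqrt q)^n$.

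There is essentially no obstacle here; the only thing that deserves a word of justification is the base-change property of the Weil numbers, and if one prefers not to cite it one can argue directly from \eqref{eqn-sum of roots} together with the equality case of the triangle inequality: maximality over $\mathbb{F}_q$ gives $\sum_i\eta_i^n=2g(-\sqrt q)^n=\pm 2g\sqrt{q^n}$, a real number of modulus $2g\sqrt{q^n}$, while each $|\eta_i^n|=\sqrt{q^n}$, so all the $\eta_i^n$ must coincide, necessarily with $\pm\sqrt{q^n}$ according to the parity of $n$, which is the assertion. The minimal case is identical with all signs positive.
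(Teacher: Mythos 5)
Your argument is correct and is exactly the one the paper intends: the paper states these properties "follow immediately" from the Weil-number description of maximality and minimality set up just before the proposition, and your computation of $\eta_i^n=(\pm\sqrt q)^n$ (or equivalently the passage from $L_{X/\mathbb F_q}(T)=(1\pm\sqrt q\,T)^{2g}$ to $L_{X/\mathbb F_{q^n}}(T)$) is that immediate argument written out. No gaps; the fallback via the equality case of the triangle inequality is a nice touch but not needed.
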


\begin{prop}\label{pureimag} \cite{MY} 
If $X$ is a curve defined over $\F_q$ and $X(\mathbb F_{q^{2n}})$ is maximal,
then $\#X(\mathbb F_{q^n})=q^n+1$.
\end{prop}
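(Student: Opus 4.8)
The plan is to argue directly with the Weil numbers of $X$ together with formula \eqref{eqn-sum of roots}. Let $\beta_1,\dots,\beta_{2g}$ denote the Weil numbers (reciprocal roots of the $L$-polynomial) of $X$ regarded over $\mathbb F_{q^n}$. By \eqref{eqn-sum of roots} applied with base field $\mathbb F_{q^n}$ we have $\#X(\mathbb F_{q^n}) = q^n + 1 - \sum_{i=1}^{2g}\beta_i$, so the statement reduces to showing $\sum_{i=1}^{2g}\beta_i = 0$.

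The key input is the behaviour of Weil numbers under a constant field extension: the Weil numbers of $X$ over the quadratic extension $\mathbb F_{q^{2n}}$ of $\mathbb F_{q^n}$ are precisely $\beta_1^2,\dots,\beta_{2g}^2$. Since $X(\mathbb F_{q^{2n}})$ is maximal and $\sqrt{q^{2n}}=q^n$, this forces $\beta_i^2 = -q^n$ for every $i$. Hence each $\beta_i$ is a square root of the negative real number $-q^n$, i.e.\ $\beta_i$ is purely imaginary (equal to $\pm\sqrt{-1}\,q^{n/2}$); in particular no $\beta_i$ is real.

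To finish, I would combine this with the integrality of the coefficients of the $L$-polynomial. Writing $L_{X/\mathbb F_{q^n}}(T)=\prod_{i=1}^{2g}(1-\beta_i T)=\sum_{j}c_jT^j$, the quantity $\sum_i\beta_i = -c_1$ is an integer by \eqref{L-poly}, hence in particular real. On the other hand $\sum_i\beta_i$ is a sum of purely imaginary numbers, hence purely imaginary. A complex number that is at once real and purely imaginary is $0$, so $\sum_i\beta_i=0$ and therefore $\#X(\mathbb F_{q^n})=q^n+1$.

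I do not expect any serious obstacle here; the proof is short once the right viewpoint is fixed. The only points that need care are (i) citing correctly that passing to a degree-$m$ constant field extension raises every Weil number to its $m$-th power, and (ii) using that the $L$-polynomial has integer — and in particular real — coefficients. As an alternative to the third step one could instead pair each $\beta_i$ with its complex conjugate $\overline{\beta_i}=q^n/\beta_i$ (the root multiset being stable under conjugation), since conjugate purely imaginary numbers cancel in the sum; but the reality-versus-purely-imaginary argument above is cleaner and avoids checking that this pairing is fixed-point-free.
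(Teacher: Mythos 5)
Your argument is correct. The paper itself gives no proof of this proposition --- it is imported from \cite{MY} as a citation --- so there is nothing internal to compare against, but your route is the standard one: maximality of $X(\F_{q^{2n}})$ forces the Weil numbers $\beta_i$ over $\F_{q^n}$ to satisfy $\beta_i^2=-q^n$, hence to be purely imaginary, while $\sum_i\beta_i=-c_1$ is a rational integer, so the sum vanishes and \eqref{eqn-sum of roots} gives $\#X(\F_{q^n})=q^n+1$. Both of the closing devices you mention (integrality of $c_1$, or pairing each $\beta_i$ with $\overline{\beta_i}$) are sound; no gap.
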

The best known example of a maximal curve is $H_k$ over $\mathbb F_{p^{2k}}$.

\subsection{Supersingular Curves}\label{supsing}

A curve $X$ of genus $g$ defined over $\mathbb F_q$ 
($q=p^r$) is \emph{supersingular} if any of the following 
equivalent properties hold.

\begin{enumerate}
	\item All Weil numbers of $X$ have the form $\eta_i = \sqrt{q}\cdot \zeta_i$ where $\zeta_i$ is a root of unity.
	\item The Newton polygon of $X$ is a straight line of slope $1/2$.
	\item The Jacobian of $X$ is geometrically isogenous to $E^g$ where
	$E$ is a supersingular elliptic curve.
	\item If $X$ has $L$-polynomial
	$L_X(T)=1+\sum\limits_{i=1}^{2g} c_iT^i$ 
	then
	$$ord_p(c_i)\geq \frac{ir}{2}, \ \mbox{for all $i=1,\ldots
		,2g$.}$$
\end{enumerate}

By the first property, a supersingular curve defined over $\mathbb F_q$ becomes minimal over some finite extension of $\mathbb F_q$.
Conversely, any minimal or maximal curve is supersingular.

\subsection{Quadratic forms}\label{QF}

We now recall the basic theory of quadratic forms over $\mathbb{F}_{q}$, where $q$ is odd.

Let $K=\mathbb{F}_{q^n}$, and 
let $Q:K\longrightarrow \mathbb{F}_{q}$ be a quadratic form.
The polarization of $Q$ is the symplectic bilinear form $B$ defined by $B(x,y)=Q(x+y)-Q(x)-Q(y)$. By definition the radical of $B$ (denoted $W$) is $
W =\{ x\in K : B(x,y)=0 \text{  for all $y\in K$}\}$. The rank of $B$ is defined to be $n-\dim(W)$. The rank of $Q$ is defined to be the rank of $B$.

The following result is well known, see Chapter $6$ of \cite{lidl}  for example.
\bigskip

\begin{prop}\label{counts}
	Continue the above notation. Let $N=|\{x\in K : Q(x)=0\}|$, and let $w=\dim(W)$. If $Q$ has odd rank then $N=q^{n-1}$; if $Q$ has even rank then $N=q^{n-1}\pm (q-1)q^{(n-2+w)/2}$.
\end{prop}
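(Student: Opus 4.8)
The plan is to strip off the radical, reduce to a nondegenerate form, diagonalize, and evaluate a single one‑variable character sum. First, since $B(x,x)=Q(2x)-2Q(x)=2Q(x)$ and $q$ is odd, $Q$ vanishes identically on $W$; choosing an $\mathbb{F}_{q}$-complement $U$ with $K=U\oplus W$, the relation $Q(u+x)=Q(u)+Q(x)+B(u,x)=Q(u)$ for $u\in U$, $x\in W$ shows the zero locus of $Q$ is a union of $q^{w}$ translates of $\{u\in U:Q(u)=0\}$. Hence $N=q^{w}N_{0}$, where $N_{0}$ counts the zeros of $Q|_{U}$; and $Q|_{U}$ is nondegenerate (if $u_{0}\in U$ pairs trivially with $U$ under $B$, then it also pairs trivially with $W$, so $u_{0}\in W\cap U=0$), of rank $r:=n-w$. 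This reduces the problem to counting zeros of a nondegenerate quadratic form in $r$ variables.

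Because $q$ is odd one can diagonalize $Q|_{U}$ over $\mathbb{F}_{q}$ as $a_{1}y_{1}^{2}+\cdots+a_{r}y_{r}^{2}$ with all $a_{i}\in\mathbb{F}_{q}^{\times}$. Fix a nontrivial additive character $\psi$ of $\mathbb{F}_{q}$ and let $\eta$ denote the quadratic character; orthogonality of additive characters gives $N_{0}=q^{-1}\sum_{t\in\mathbb{F}_{q}}\prod_{i=1}^{r}\sum_{y\in\mathbb{F}_{q}}\psi(ta_{i}y^{2})$. The term $t=0$ contributes $q^{r-1}$. For $t\neq0$ I would invoke the standard Gauss-sum identity $\sum_{y}\psi(cy^{2})=\eta(c)\,g$ (valid for $c\neq0$), where $g=\sum_{y}\eta(y)\psi(y)$ satisfies $g^{2}=\eta(-1)q$; this collapses the product to $\eta(t)^{r}\eta(\Delta)g^{r}$ with $\Delta=a_{1}\cdots a_{r}$, so that $N_{0}=q^{r-1}+q^{-1}\eta(\Delta)g^{r}\sum_{t\in\mathbb{F}_{q}^{\times}}\eta(t)^{r}$.

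It remains to split on the parity of $r=n-w$. If $r$ is odd the inner sum $\sum_{t\neq0}\eta(t)^{r}=\sum_{t\neq0}\eta(t)$ vanishes, hence $N_{0}=q^{r-1}$ and $N=q^{w}q^{r-1}=q^{n-1}$. If $r=2s$ is even then $\eta(t)^{r}\equiv1$, $g^{r}=(\eta(-1)q)^{s}$, and $N_{0}=q^{r-1}+\eta((-1)^{s}\Delta)(q-1)q^{s-1}=q^{r-1}\pm(q-1)q^{r/2-1}$; multiplying by $q^{w}$ and simplifying $w+r/2-1=(n-2+w)/2$ gives $N=q^{n-1}\pm(q-1)q^{(n-2+w)/2}$. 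The only non-elementary input is the evaluation $g^{2}=\eta(-1)q$ of the quadratic Gauss sum (Gauss's precise sign formula for $g$ itself is not needed), so I do not expect a single hard step; rather, the care will go into the diagonalization over $\mathbb{F}_{q}$, into tracking the radical dimension $w$ through the reduction, and into keeping the odd/even cases straight.
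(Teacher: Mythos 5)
Your proof is correct and complete: the reduction modulo the radical, the diagonalization, and the Gauss-sum evaluation (including the exponent bookkeeping $w+r/2-1=(n-2+w)/2$) all check out. The paper offers no proof of this proposition --- it cites it as well known from Chapter 6 of Lidl--Niederreiter --- and your argument is essentially the standard one given there, so there is nothing to reconcile.
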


In this paper we will be concerned with quadratic forms of the type
$Q(x)=\Tr(f(x))$ where $f(x)$ has the form $\sum a_{ij} x^{q^i+q^j}$ for any prime power $q$ and $\Tr$ maps to $\mathbb F_q$.
If $N$ is the number of $x\in \mathbb{F}_{q^n}$ with $\Tr(f(x))=0$, then
because elements of trace 0 have the form $y^q-y$,  finding $N$ is equivalent
to finding the exact number of $\mathbb{F}_{q^n}$-rational points on 
the curve $C: y^q-y=f(x)$.
Indeed, 
\begin{equation}\label{quadpts}
\#C(\mathbb F_{q^n})=qN+1.
\end{equation}

\subsection{Relations on the Number of Rational Points}

In this section we state a theorem which allows us to find the number of 
$\F_{p^n}$-rational points of a supersingular curve by finding the the number of 
$\F_{p^m}$-rational points only for the divisors $m$ of $s$, where the Weil numbers 
are $\sqrt{p}$ times an $s$-th root of unity. Note that $s$ is even because
equality holds in the Hasse--Weil bound over $\F_{p^s}$.

\begin{thm}[\cite{MY2}]\label{reduction-thm} 	
	Let $X$ be a supersingular curve of genus $g$ defined over $\mathbb F_q$ with
period $s$.
Let $n$ be a positive integer, let $\gcd (n,s)=m$ and write $n=m\cdot t$. If $q$ is odd, then we have $$
\#X(\F_{q^n})-(q^n+1)=\begin{cases}
q^{(n-m)/2}[\#X(\F_{q^m})-(q^m+1)] &\text{if } m\cdot r \text{ is even},\\
q^{(n-m)/2}[\#X(\F_{q^m})-(q^m+1)]&\text{if } m \cdot r \text{ is odd and } p\mid t,
\\
q^{(n-m)/2}[\#X(\F_{q^m})-(q^m+1)]\left(\frac{(-1)^{(t-1)/2}t}{p}\right)&\text{if } m \cdot r \text{ is odd and } p\nmid t.
\end{cases}$$ If $q$ is even, then we have  $$
\#X(\F_{q^n})-(q^n+1)=\begin{cases}
q^{(n-m)/2}[\#X(\F_{q^m})-(q^m+1)] &\text{if } m\cdot r \text{ is even},\\
q^{(n-m)/2}[\#X(\F_{q^m})-(q^m+1)](-1)^{(t^2-1)/8}&\text{if } m \cdot r \text{ is odd}.\\
\end{cases}$$
\end{thm}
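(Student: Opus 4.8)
The plan is to work entirely with the Weil numbers of $X$ and to reduce the statement to an identity among power sums of roots of unity, in which the two ``square-root'' phenomena are accounted for by a single quadratic Gauss sum.

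First I would set up notation. By \eqref{eqn-sum of roots} the quantity $A_n := \#X(\F_{q^n})-(q^n+1)$ equals $-\sum_{i=1}^{2g}\eta_i^n$. Since $X$ is supersingular, the first characterization of supersingularity in Section~\ref{supsing} lets me write $\eta_i=\sqrt q\,\zeta_i$ with each $\zeta_i$ a root of unity, and by definition of the period the $\zeta_i$ are $s$-th roots of unity, so the power sum $P(j):=\sum_i\zeta_i^j$ depends only on $j\bmod s$ and $A_n=-q^{n/2}P(n)$. Writing $q=p^r$, the reduction claim $A_n=(\text{factor})\cdot q^{(n-m)/2}A_m$ becomes a comparison of $P(n)$ with $P(m)$, because $q^{(n-m)/2}A_m=-q^{n/2}P(m)$.

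Next I would extract the Galois identity that drives everything. The reciprocal $L$-polynomial $\prod_i(1-\eta_i T)$ has integer coefficients by \eqref{L-poly}, so the multiset $\{\eta_i\}$ is stable under $\mathrm{Gal}(\mathbb{Q}(\zeta_N)/\mathbb{Q})$, where $N=\operatorname{lcm}(s,4p)$ is large enough that all $\eta_i\in\mathbb{Q}(\zeta_N)$ (using $\sqrt p\in\mathbb{Q}(\zeta_{4p})$ via the quadratic Gauss sum, and $\sqrt 2=\zeta_8+\zeta_8^{-1}$ when $p=2$). For $a$ coprime to $N$ the automorphism $\sigma_a:\zeta_N\mapsto\zeta_N^a$ sends $\zeta_i\mapsto\zeta_i^a$ and $\sqrt p\mapsto\chi(a)\sqrt p$ for a quadratic character $\chi$ (the Legendre symbol $\left(\frac{\cdot}{p}\right)$ up to the sign from $\sqrt{\pm1}$ for odd $p$, and the order-$8$ character $(-1)^{(a^2-1)/8}$ for $p=2$). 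Stability of $\{\eta_i\}$ then reads $\{\chi(a)\zeta_i^a\}=\{\zeta_i\}$ as multisets, which on summing $j$-th powers yields the key identity $P(aj)=\chi(a)^j P(j)$; when $r$ is even, $\sqrt q=p^{r/2}$ is rational, the character does not appear, and $P(aj)=P(j)$.

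I would then perform the reduction. Writing $n=mt$ with $m=\gcd(n,s)$, so $\gcd(t,s/m)=1$, I pick $a$ coprime to $N$ with $a\equiv t\pmod{s/m}$; then $am\equiv n\pmod s$, and periodicity together with the identity above give $P(n)=P(am)=\chi(a)^mP(m)$, hence $A_n=\chi(a)^m\,q^{(n-m)/2}A_m$. If $mr$ is even then either $m$ is even (so $\chi(a)^m=1$) or $r$ is even (so $\chi$ is trivial), which is the first case. If $mr$ is odd then $\chi(a)^m=\chi(a)$, and one argues that whenever $A_m\neq0$ the Gauss sum $\sqrt p$ must genuinely occur in $P(m)\in\mathbb{Q}(\zeta_s)$, forcing $p\mid s$; then $p\mid s/m$, the congruence pins down $a\equiv t\pmod p$, and evaluating $\chi$ gives $\left(\frac{(-1)^{(t-1)/2}t}{p}\right)$ for odd $p$ with $p\nmid t$ and $(-1)^{(t^2-1)/8}$ for $p=2$, whereas $p\mid t$ forces $p\nmid s/m$ and leaves enough freedom to choose $a$ with $\chi(a)=1$.

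The main obstacle is precisely this last step: converting the abstract sign $\chi(a)$ into the stated arithmetic factors. This is where the quadratic Gauss sum and quadratic reciprocity enter, where one must justify that $\sqrt p$ appears in $P(m)$ exactly when the count is nonzero (so that $p\mid s$ and the character is determined), and where the parity constraint that $t$ is odd is obtained automatically from $4p\mid s/m$ (respectively $8\mid s/m$). The characteristic-two computation runs in parallel, with the order-$8$ cyclotomic description of $\sqrt2$ replacing the odd Gauss sum and producing the factor $(-1)^{(t^2-1)/8}$.
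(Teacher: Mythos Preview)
The paper does not contain a proof of this theorem: it is quoted verbatim from \cite{MY2} as a background tool in Section~2.4, with no argument given here. So there is no ``paper's own proof'' to compare against.

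That said, your outline is the standard route and is essentially correct. The Galois--stability argument for the multiset $\{\eta_i\}$, combined with the explicit action of $\sigma_a$ on $\sqrt{p}$ via the quadratic Gauss sum (and on $\sqrt{2}$ via $\zeta_8+\zeta_8^{-1}$), is exactly how such reduction formulas are proved, and your identification of the key step---showing that $\sqrt{p}\in\mathbb{Q}(\zeta_{s/m})$ whenever $mr$ is odd and $A_m\neq 0$, so that the congruence $a\equiv t\pmod{s/m}$ pins down $\chi(a)$---is right. Two small points worth tightening: (i) the identity coming from stability is $P(aj)=\chi(a)^{rj}P(j)$, not $\chi(a)^j$; this is harmless since the only nontrivial case has $r$ odd, but it is worth writing correctly. (ii) The conductor of $\mathbb{Q}(\sqrt{p})$ is $p$ when $p\equiv 1\pmod 4$ and $4p$ when $p\equiv 3\pmod 4$, so your claim ``$4p\mid s/m$'' is only needed in the latter case; in the former $p\mid s/m$ suffices and $\chi(a)=\left(\frac{a}{p}\right)$ alone. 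Checking both cases against $\left(\frac{(-1)^{(t-1)/2}t}{p}\right)=\left(\frac{-1}{p}\right)^{(t-1)/2}\left(\frac{t}{p}\right)$ then matches. For the $p\mid t$ subcase your freedom argument is fine once you note that if one valid $a$ gave $\chi(a)=1$ and another gave $\chi(a)=-1$ then $P(m)=0$, so the formula holds trivially.
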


\subsection{Discrete Fourier Transform}

In this section we recall the statement of  the Discrete Fourier Transform
and its inverse.

\begin{prop}[Inverse Discrete Fourier Transform]
	Let $N$ be a positive integer and let $w_N$ be a primitive $N$-th root of unity
	over any field where $N$ is invertible.  If $$F_n=\sum_{j=0}^{N-1}f_jw_N^{-jn}$$ for $n=0,1\cdots, N-1$ then we have  $$f_n=\frac1{N}\sum_{j=0}^{N-1}F_jw_N^{jn}$$ for $n=0,1\cdots, N-1$.
\end{prop}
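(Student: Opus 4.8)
The plan is to verify the claimed inversion formula by direct substitution, reducing everything to the orthogonality relation satisfied by the powers of $w_N$. First I would take the proposed expression for $f_n$ and insert the definition of $F_j$:
$$\frac1N\sum_{j=0}^{N-1}F_jw_N^{jn}=\frac1N\sum_{j=0}^{N-1}\Bigl(\sum_{\ell=0}^{N-1}f_\ell w_N^{-j\ell}\Bigr)w_N^{jn}=\frac1N\sum_{\ell=0}^{N-1}f_\ell\Bigl(\sum_{j=0}^{N-1}w_N^{j(n-\ell)}\Bigr),$$
where interchanging the two finite summations needs no justification beyond distributivity and commutativity in the field.

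The heart of the matter is the evaluation of the inner sum $S_{n,\ell}:=\sum_{j=0}^{N-1}w_N^{j(n-\ell)}$. Since $n,\ell\in\{0,1,\dots,N-1\}$, the integer $n-\ell$ is a multiple of $N$ precisely when $n=\ell$. If $n=\ell$, every summand equals $1$, so $S_{n,\ell}=N$. If $n\ne\ell$, then $w_N^{n-\ell}\ne 1$ because $w_N$ is a \emph{primitive} $N$-th root of unity (its multiplicative order is exactly $N$), hence $w_N^{n-\ell}-1$ is nonzero and invertible, and the geometric-series identity gives
$$S_{n,\ell}=\frac{\bigl(w_N^{n-\ell}\bigr)^N-1}{w_N^{n-\ell}-1}=\frac{\bigl(w_N^{N}\bigr)^{n-\ell}-1}{w_N^{n-\ell}-1}=0.$$
Thus $S_{n,\ell}=N\,\delta_{n,\ell}$, and substituting back only the term $\ell=n$ survives: $\frac1N\sum_{\ell=0}^{N-1}f_\ell S_{n,\ell}=\frac1N\,f_n\,N=f_n$, which is exactly the asserted identity; the division by $N$ is legitimate because $N$ is invertible in the field by hypothesis.

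Since the whole argument is a short finite computation, there is no real obstacle. The only two points requiring (mild) care — and they are precisely the hypotheses of the statement — are that $w_N^{n-\ell}-1$ be nonzero for $n\ne\ell$, which is exactly what primitivity of $w_N$ provides, and that $N$ be invertible, so that the normalizing factor $1/N$ makes sense.
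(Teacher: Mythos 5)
Your proof is correct: the substitution, the interchange of finite sums, and the evaluation of $\sum_{j=0}^{N-1}w_N^{j(n-\ell)}=N\,\delta_{n,\ell}$ via primitivity and the geometric series constitute the standard orthogonality argument, and you correctly invoke both hypotheses (primitivity of $w_N$ and invertibility of $N$) exactly where they are needed. The paper states this proposition as a recalled fact without proof, so there is nothing to compare against; your argument is the canonical one and fills the gap completely.
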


\subsection{Divisibilty Theorems}

The following theorem is well-known.

\begin{thm}\label{chapman:KleimanSerre} 
(Kleiman--Serre)
If there is a surjective morphism of curves 
$C \longrightarrow D$ that is defined over $\mathbb{F}_q$
then $\mathrm{L}_{D}(T)$ divides $\mathrm{L}_C(T)$.
\end{thm}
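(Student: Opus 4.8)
The plan is to express each L-polynomial as the reverse characteristic polynomial of the Frobenius endomorphism acting on a cohomology space, and to show that the surjective morphism $\phi\colon C \to D$ induces a Frobenius-equivariant \emph{injection} of the cohomology of $D$ into that of $C$. Divisibility of the L-polynomials then follows from the elementary fact that the characteristic polynomial of an operator restricted to an invariant subspace divides the characteristic polynomial on the whole space.

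Concretely, I would fix a prime $\ell \neq p$ and use that for a smooth projective curve $X/\mathbb{F}_q$ one has
$$L_X(T) = \det\!\bigl(1 - T\,\mathrm{Frob}_q \mid H^1(\overline{X}, \mathbb{Q}_\ell)\bigr),$$
a polynomial in $\mathbb{Z}[T]$ whose reciprocal roots are the Weil numbers $\eta_i$ of Section~\ref{morec}. (Equivalently one may replace $H^1(\overline{X}, \mathbb{Q}_\ell)$ by the rational Tate module $V_\ell\,\mathrm{Jac}(X)$, on which $\mathrm{Frob}_q$ acts with the same characteristic polynomial.) A dominant morphism of smooth projective curves is automatically finite and surjective, so $\phi$ induces a pullback $\phi^*\colon H^1(\overline{D}, \mathbb{Q}_\ell) \to H^1(\overline{C}, \mathbb{Q}_\ell)$; because $\phi$ is defined over $\mathbb{F}_q$, this map commutes with $\mathrm{Frob}_q$.

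The crux is to show $\phi^*$ is injective. For this I would use the pushforward $\phi_*$ together with the projection formula $\phi_* \circ \phi^* = \deg(\phi)\cdot\mathrm{id}$. Since $\deg(\phi)$ is a nonzero integer, invertible in $\mathbb{Q}_\ell$, the composite is an isomorphism, so $\phi^*$ has trivial kernel. Hence $W := \phi^*\,H^1(\overline{D}, \mathbb{Q}_\ell)$ is a $\mathrm{Frob}_q$-stable subspace of $V := H^1(\overline{C}, \mathbb{Q}_\ell)$ on which $\mathrm{Frob}_q$ acts with the same characteristic polynomial as on $H^1(\overline{D}, \mathbb{Q}_\ell)$.

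To conclude, choosing a basis of $V$ extending a basis of $W$ makes the matrix of $\mathrm{Frob}_q$ block upper-triangular, so $\det(1 - T\,\mathrm{Frob}_q \mid W)$ divides $\det(1 - T\,\mathrm{Frob}_q \mid V)$; that is, $L_D(T) \mid L_C(T)$ in $\mathbb{Q}_\ell[T]$. Since $L_C, L_D \in \mathbb{Z}[T]$ and $L_D$ has constant term $1$ (hence is primitive), Gauss's lemma upgrades this to divisibility in $\mathbb{Z}[T]$. The one substantive input is the projection formula giving injectivity of $\phi^*$; all the rest is formal. The same argument can be phrased without cohomology: $\phi^*\colon \mathrm{Jac}(D) \to \mathrm{Jac}(C)$ is an isogeny onto its image, yielding a Frobenius-equivariant decomposition $\mathrm{Jac}(C) \sim \mathrm{Jac}(D) \times A$ up to isogeny, and hence $L_C = L_D \cdot L_A$ with $L_A \in \mathbb{Z}[T]$, from which $L_D \mid L_C$ is immediate.
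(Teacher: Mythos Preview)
Your argument is correct and is essentially the standard proof of the Kleiman--Serre theorem via $\ell$-adic cohomology (or equivalently via the isogeny decomposition of Jacobians). Note, however, that the paper does not actually prove this statement: it is presented in Section~2.6 as a well-known background result with no proof supplied, so there is nothing in the paper to compare your approach against.
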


Moreover, If there is a surjective morphism of curves 
$C \longrightarrow D$ that is defined over $\mathbb{F}_q$, we will say that $D$ is covered by $C$.

However, there are
cases where there is no map of curves and yet there is divisibility of
 L-polynomials. 
We suspect that $H_k^{(p)} $ and $H_{2k}^{(p)} $ is such a case, see Corollary \ref{divisibility-2k}.

\begin{prop}[\cite{MY}]\label{divisiblity-period}
	Let $C$ and $D$ be supersingular curves over $\mathbb F_q$. If $L(C)$ divides $L(D)$, then $s_C$ divides $s_D$.
\end{prop}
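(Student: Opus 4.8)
The plan is to convert the divisibility $L(C)\mid L(D)$ into an inclusion of the two multisets of Weil numbers, and to describe the period of a supersingular curve directly in terms of its Weil numbers; the conclusion is then immediate.

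First I would record the description of the period. Let $X/\mathbb F_q$ be supersingular of genus $g$ with Weil numbers $\eta_1,\dots,\eta_{2g}$; fixing a square root $\sqrt q$, write $\eta_i=\sqrt q\,\zeta_i$. Because $X$ is supersingular, property~(1) of Section~\ref{supsing} says each $\zeta_i$ is a root of unity. For $N\ge 1$, $X(\mathbb F_{q^N})$ is minimal exactly when $\eta_i^N=q^{N/2}$ for all $i$, that is, when $\zeta_i^N=1$ for all $i$ and $q^N$ is a perfect square. The first condition cuts out the set of multiples of $\operatorname{lcm}_i\operatorname{ord}(\zeta_i)$ (here I use that the $\zeta_i$ are roots of unity), and the second cuts out the set of multiples of $1$ or of $2$ according as $q$ is or is not a square. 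An intersection of two sets of multiples is the set of multiples of their least common multiple, so $\{N:X(\mathbb F_{q^N})\text{ is minimal}\}$ is the set of multiples of a single positive integer --- its least element --- namely the period $s_X$. In particular $X(\mathbb F_{q^N})$ is minimal if and only if $s_X\mid N$. (Replacing $\sqrt q$ by $-\sqrt q$ only replaces each $\zeta_i$ by $-\zeta_i$ and does not affect any of this.)

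Next, the inclusion of Weil numbers: $L(C),L(D)\in\mathbb Z[T]\subset\mathbb C[T]$, and in the UFD $\mathbb C[T]$ the hypothesis $L(C)\mid L(D)$ forces every root of $L(C)$ to be a root of $L(D)$ of at least the same multiplicity. Since $L_X(T)=\prod_i(1-\eta_iT)$ has nonzero constant term $c_0=1$, its roots are the $\eta_i^{-1}$; hence the multiset $\{\eta_i\}$ of Weil numbers of $C$ is contained in that of $D$, and, dividing by the fixed $\sqrt q$, the same holds for the normalized Weil numbers $\zeta_i$. To finish, apply the first paragraph to $D$ with $N=s_D$: then $\zeta_j^{s_D}=1$ for every normalized Weil number of $D$, hence also for every normalized Weil number of $C$; and $q^{s_D}$ is a perfect square (this depends only on $q$). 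Therefore $C(\mathbb F_{q^{s_D}})$ is minimal, and the last sentence of the first paragraph yields $s_C\mid s_D$, as claimed.

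The argument is purely formal once the one nontrivial input is granted, and that is precisely where supersingularity is used: the $\zeta_i$ are honest roots of unity, so the set of $N$ for which $X(\mathbb F_{q^N})$ is minimal is a set of multiples and hence equals the multiples of the period $s_X$. (Equivalently, one can work entirely with $L$-polynomials over extension fields: divisibility over $\mathbb F_q$ is inherited over every $\mathbb F_{q^s}$, and $s_X$ is the least $s$ for which $L_{X/\mathbb F_{q^s}}(T)=(1-q^{s/2}T)^{2g}$; from $L_{D/\mathbb F_{q^{s_D}}}(T)=(1-q^{s_D/2}T)^{2g_D}$ one reads off that $L_{C/\mathbb F_{q^{s_D}}}(T)$ is a power of $1-q^{s_D/2}T$, so $C$ is minimal over $\mathbb F_{q^{s_D}}$.)
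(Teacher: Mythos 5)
The paper does not prove this proposition at all --- it is imported from the reference \cite{MY} without argument --- so there is nothing to compare line by line. Your proof is correct and self-contained: the two inputs (divisibility of polynomials with constant term $1$ in $\mathbb C[T]$ gives containment of root multisets, hence of Weil-number multisets; and for a supersingular curve the set of $N$ with $X(\mathbb F_{q^N})$ minimal is exactly the set of multiples of $s_X$, because it is an intersection of sets of multiples) do combine to give $s_C\mid s_D$ immediately. One minor remark: the auxiliary condition ``$q^N$ is a perfect square'' is actually forced by $\zeta_i^N=1$ for all $i$, since then $\sum_i\eta_i^N=2g\,q^{N/2}$ is a rational integer (being $q^N+1-\#X(\mathbb F_{q^N})$); carrying it along as a separate set of multiples is harmless but redundant.
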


\section{The Image of the Maps $y\to y^{p^k}\pm y$ over $\F_{p^n}$}
In this section, we will give some useful tools. We will use these tools in sections \ref{sectHk} and \ref{sectHk-even}.
\begin{lemma}\label{prop-y^p-y}
	Let $n$ and $k$ be integers with $(n,k)=d$. For integer $m\ge 1$ define $$S_m=\{y^{p^m}-y \ : \ y \in \mathbb F_{p^n}\}.$$ Then the set equality $S_k=S_d$ holds.
\end{lemma}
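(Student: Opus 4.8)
The claim is that $S_k = S_d$ where $d = (n,k)$. Since $d \mid k$, the easy inclusion is $S_d \subseteq S_k$: writing $k = d\ell$, any $z = y^{p^d} - y$ can be rewritten as a telescoping sum $y^{p^{d\ell}} - y = \sum_{i=0}^{\ell-1}\big(y_i^{p^d} - y_i\big)$ with $y_i = y^{p^{di}}$... wait, that gives an element of $S_k$ as a sum, not directly. Let me instead go the other direction, which is the natural one: show $S_k \subseteq S_d$, and then get equality by a counting/kernel argument.

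First I would treat $S_k$ and $S_d$ as images of the $\F_p$-linear maps $\phi_k, \phi_d : \F_{p^n} \to \F_{p^n}$ given by $\phi_m(y) = y^{p^m} - y$. The kernel of $\phi_m$ on $\F_{p^n}$ is $\F_{p^n} \cap \F_{p^m} = \F_{p^{(n,m)}}$, so $\ker \phi_k = \F_{p^d} = \ker \phi_d$, and hence $|S_k| = |S_d| = p^{n}/p^{d} = p^{n-d}$. So it suffices to prove one inclusion, say $S_k \subseteq S_d$. For that, since $d = (n,k)$, write $d = an + bk$ for integers $a,b$ (with $b$ chosen so things make sense; really one works with the multiplicative order, see below). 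On $\F_{p^n}$ the Frobenius $x \mapsto x^p$ has order $n$, so $x^{p^k}$ depends only on $k \bmod n$, and the subgroup of $\mathbb Z/n\mathbb Z$ generated by $k$ is generated by $d$. Concretely, there is an integer $j$ with $jk \equiv d \pmod n$, so $y^{p^{jk}} = y^{p^d}$ for all $y \in \F_{p^n}$. Now use the telescoping identity
\begin{equation*}
y^{p^{jk}} - y = \sum_{i=0}^{j-1} \left( \left(y^{p^{ik}}\right)^{p^k} - y^{p^{ik}} \right) = \sum_{i=0}^{j-1} \phi_k\!\left(y^{p^{ik}}\right).
\end{equation*}
This exhibits $y^{p^d} - y$ as a sum of $j$ elements of $S_k$ — but $S_k$ is the \emph{image} of a linear map, hence a subgroup of $(\F_{p^n},+)$, so it is closed under addition. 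Therefore $y^{p^d} - y \in S_k$ for every $y$, i.e. $S_d \subseteq S_k$. Combined with $|S_k| = |S_d|$, this forces $S_k = S_d$.

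Let me double-check the direction: the identity shows every element of $S_d$ lies in $S_k$, giving $S_d \subseteq S_k$; together with equal cardinality this yields $S_k = S_d$, which is exactly the claim. (One could alternatively get $S_k \subseteq S_d$ directly: since $d \mid k$, write $k = d\ell$ and telescope $y^{p^k} - y = \sum_{i=0}^{\ell-1}\phi_d(y^{p^{di}})$, showing $S_k \subseteq S_d$ because $S_d$ is a subgroup; this half needs no Bézout argument at all. Then the reverse inclusion via the $jk \equiv d \bmod n$ trick, or again the cardinality count, finishes it. Either bookkeeping works.)

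**Main obstacle.** There is no deep obstacle here; the only thing to be careful about is the interplay between the exponent $k$ as an integer and as a residue mod $n$ (Frobenius has order $n$ on $\F_{p^n}$), and making sure the subgroup of $\mathbb Z/n\mathbb Z$ generated by $k$ really is the one generated by $d=(n,k)$ — standard, but it is the step where one invokes that $(n,k)$ behaves well. The cleanest write-up is: (i) identify kernels to get $|S_k| = |S_d| = p^{n-d}$; (ii) note both are additive subgroups; (iii) use $d \mid k$ to telescope and get $S_k \subseteq S_d$; (iv) conclude by cardinality. This avoids Bézout entirely and is the route I would actually take.
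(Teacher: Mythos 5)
Your proposal is correct and follows essentially the same route as the paper: equal cardinalities via the common kernel $\F_{p^d}$, plus the inclusion $S_k\subseteq S_d$ from the telescoping identity $y^{p^k}-y=\sum_{i=0}^{k/d-1}\bigl((y^{p^{di}})^{p^d}-y^{p^{di}}\bigr)$, which is just the paper's identity $y^{p^k}-y=\phi_d\bigl(\sum_i y^{p^{di}}\bigr)$ written out term by term. The Bézout detour in your first paragraph is unnecessary, as you yourself note at the end.
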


\begin{proof}
Since both $y\to y^{p^k}-y$ and $y\to y^{p^d}-y$ are additive group homomorphism from $\mathbb F_{p^n}$ to $\mathbb F_{p^n}$ with kernel $\mathbb F_{p^d}$, the cardinalities of $S_k$ and $S_d$ equal. Since $$y^{p^k}-y=\left(\sum_{i=0}^{k/d-1}y^{p^{di}}\right)^{p^d}-\sum_{i=0}^{k/d-1}y^{p^{di}},$$ we have $S_k\subseteq S_d$ and therefore $S_k=S_d$. 
	
\end{proof}

Note that if $p=2$, we will prefer to use Lemma \ref{prop-y^p-y} instead of the following lemma. 

\begin{lemma}\label{prop-y^p+y}
	Let $n$ and $k$ be integers with $(n,k)=d$. Let $k/d$ is odd and $n/d$ is even. Then $$\{y^{p^d}+y \ : \ y \in \mathbb F_{p^n}\}=\{y^{p^k}+y \ : \ y \in \mathbb F_{p^n}\}.$$
\end{lemma}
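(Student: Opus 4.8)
The plan is to view both sides of the claimed identity as images of $\F_p$-linear endomorphisms of $\F_{p^n}$. Write $\phi_d,\phi_k\colon \F_{p^n}\to\F_{p^n}$ for the additive maps $\phi_d(y)=y^{p^d}+y$ and $\phi_k(y)=y^{p^k}+y$; the two sets in the statement are $\operatorname{im}\phi_d$ and $\operatorname{im}\phi_k$, which are $\F_p$-subspaces of $\F_{p^n}$. I would proceed in three steps: (i) give an explicit identity showing $\operatorname{im}\phi_k\subseteq\operatorname{im}\phi_d$; (ii) show $\ker\phi_k=\ker\phi_d$, so the two images have the same dimension; (iii) conclude that the images, hence the two sets, coincide.

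For step (i), set $m=k/d$, which is odd by hypothesis, and for $y\in\F_{p^n}$ put $z=\sum_{i=0}^{m-1}(-1)^i y^{p^{di}}\in\F_{p^n}$. Applying $x\mapsto x^{p^d}$ shifts each exponent $p^{di}$ to $p^{d(i+1)}$, so after a telescoping cancellation one gets $\phi_d(z)=z^{p^d}+z=y-(-1)^m y^{p^k}$; since $m$ is odd this equals $y^{p^k}+y=\phi_k(y)$. Hence $\phi_k(y)\in\operatorname{im}\phi_d$ for every $y$, i.e.\ $\operatorname{im}\phi_k\subseteq\operatorname{im}\phi_d$. This is the only place the hypothesis on $k/d$ is used, and it is the analogue for the map $y\mapsto y^{p^j}+y$ of the telescoping identity used in Lemma \ref{prop-y^p-y}; choosing the \emph{alternating} sum so that the sign $(-1)^m$ coming out of the telescope is $-1$ is the one genuinely substantive idea in the proof.

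For step (ii), note that $\phi_d(y)=0$ gives $y^{p^d}=-y$, and applying $x\mapsto x^{p^d}$ once more (using that $p$ is odd, so $(-1)^{p^d}=-1$) yields $y^{p^{2d}}=y$; similarly $\phi_k(y)=0$ forces $y^{p^{2k}}=y$. Writing $n=dn'$, $k=dk'$ with $\gcd(n',k')=1$, $n'$ even and $k'$ odd, one checks $\gcd(2d,n)=\gcd(2k,n)=2d$ (the factor $2$ comes from the $2$-adic valuations, while $\gcd(n',k')=1$ kills the odd part) and in particular $2d\mid n$. Therefore both $\ker\phi_d$ and $\ker\phi_k$ lie in $\F_{p^{2d}}\subseteq\F_{p^n}$. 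On $\F_{p^{2d}}$ we have $x^{p^k}=x^{p^{k\bmod 2d}}=x^{p^d}$ because $k\equiv d\pmod{2d}$ (as $k/d$ is odd), so $\phi_k$ and $\phi_d$ agree on $\F_{p^{2d}}$ and $\ker\phi_k=\{y\in\F_{p^{2d}}:y^{p^d}+y=0\}=\ker\phi_d$. Consequently $\operatorname{im}\phi_k$ and $\operatorname{im}\phi_d$ have the same cardinality, and combined with the inclusion of step (i) this forces $\operatorname{im}\phi_k=\operatorname{im}\phi_d$, which is the assertion.

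The routine parts are the telescoping computation in (i) and the $\gcd$/$2$-adic bookkeeping in (ii); I do not expect any real obstacle beyond keeping the signs and congruences straight. The conceptual content is entirely in the alternating-sum identity, which explains why the "plus" map over $\F_{p^n}$ behaves, under the stated parity conditions, exactly like the "minus" map of Lemma \ref{prop-y^p-y}.
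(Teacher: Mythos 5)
Your proof is correct, but it takes a different route from the paper. The paper's proof is a twisting argument: since $n/d$ is even there is $\mu\in\F_{p^n}^\times$ with $\mu^{p^d}=-\mu$, and since $k/d$ is odd also $\mu^{p^k}=-\mu$; substituting $y\mapsto \mu y$ and multiplying by $-\mu^{-1}$ then transports the set equality $\{y^{p^d}-y\}=\{y^{p^k}-y\}$ of Lemma \ref{prop-y^p-y} directly into the stated equality for the ``plus'' maps, using that lemma as a black box. You instead re-run the internal structure of the proof of Lemma \ref{prop-y^p-y} on the plus maps themselves: an alternating telescoping sum $z=\sum_{i=0}^{m-1}(-1)^i y^{p^{di}}$ (the sign alternation replacing the plain sum, and $m=k/d$ odd ensuring the surviving end term has the right sign) gives the inclusion $\operatorname{im}\phi_k\subseteq\operatorname{im}\phi_d$, and a kernel computation --- both kernels sit inside $\F_{p^{2d}}$ because $\gcd(2k,n)=\gcd(2d,n)=2d$, where $\phi_k$ and $\phi_d$ coincide --- gives equal cardinalities. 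Both arguments are sound; the paper's is shorter and isolates the two hypotheses in the single element $\mu$, while yours is self-contained, makes visible exactly where ``$k/d$ odd'' (the telescope sign) and ``$n/d$ even'' (the containment $\F_{p^{2d}}\subseteq\F_{p^n}$ and the gcd bookkeeping) enter, and your alternating sum is precisely the map $y\mapsto\sum_i(-1)^iy^{p^{ki}}$ that the paper later uses for the covering $H_{kt}\to H_{k,t}$, so the two viewpoints reinforce each other.
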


\begin{proof}
Since $n/d$ is even, there exist $\mu \in \mathbb F_{p^n}^\times$ such that $\mu^{p^d}=-\mu$. Since $k/d$ is odd, we also have $\mu^{p^k}=-\mu$.

We have $$\{y^{p^d}-y \ : \ y \in \mathbb F_{p^n}\}=\{y^{p^k}-y \ : \ y \in \mathbb F_{p^n}\}$$ by Lemma \ref{prop-y^p-y}. If we multiply the sets by $-\mu^{-1}$, $$\{-\mu^{-1}(y^{p^d}-y) \ : \ y \in \mathbb F_{p^n}\}=\{-\mu^{-1}(y^{p^k}-y) \ : \ y \in \mathbb F_{p^n}\}$$ holds. Since $\{\mu y \ : \  y \in \mathbb F_{p^n}\}=\{y \ : \  y \in \mathbb F_{p^n}\}$,   $$\{-\mu^{-1}((\mu y)^{p^d}-(\mu y)) \ : \  y \in \mathbb F_{p^n}\}=\{-\mu^{-1}((\mu y)^{p^k}-(\mu y)) \ : \  y \in \mathbb F_{p^n}\}$$ holds. Simply put,	$$\{y^{p^d}+y \ : \ y \in \mathbb F_{p^n}\}=\{y^{p^k}+y \ : \ y \in \mathbb F_{p^n}\}$$ holds.	

\end{proof}

\section{The Number of $\mathbb F_{p^n}$-Rational Points of $y^{p^k}+y=x^2$ where $p$ is odd}\label{sectHk0}
Let $p$ be an odd prime. In this section we will find the number of rational points of $H_{k,0}$ over $\F_{p^n}$ for all $n\ge 1$ where $k$ is a positive integer and prove Theorem \ref{thm-Hk0-point}.
\begin{lemma}\label{lemma-Hk-max-min-x^2}
	Then the curve $H_{k,0}$ is maximal over $\F_{p^{2k}}$ and minimal over $\F_{p^{4k}}$.
\end{lemma}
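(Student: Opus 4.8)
The plan is to analyze the curve $H_{k,0}: y^{p^k}+y=x^2$ directly via character sums, reducing the point count to a Gauss-sum computation. First I would observe that for each fixed $x \in \mathbb F_{p^n}$, the number of $y \in \mathbb F_{p^n}$ with $y^{p^k}+y = x^2$ equals $p^k$ if $x^2 \in S_k := \{y^{p^k}+y : y\in\mathbb F_{p^n}\}$ and $0$ otherwise (plus one point at infinity). Since $y\mapsto y^{p^k}+y$ is $\mathbb F_p$-linear with kernel of size $p^{(n,k)}$ — and, crucially, by Lemma \ref{prop-y^p-y} (or Lemma \ref{prop-y^p+y}) the image $S_k$ coincides with an image of the simpler map $y\mapsto y^{p^{d}}\pm y$ where $d=(n,k)$ — membership in $S_k$ is detected by a trace condition. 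Concretely, writing things in terms of $Q(x)=\Tr(x^2)$ or the relevant additive-character expansion, the count $\#H_{k,0}(\mathbb F_{p^n})$ becomes $p^n+1$ plus a sum over nontrivial additive characters of a quadratic Gauss sum in $x$; this is exactly the quadratic-form setup of Proposition \ref{counts}, with $f(x)=x^2$ a rank-$n$ form (the polarization $B(x,y)=2xy$ is nondegenerate since $p$ is odd), so the deviation from $p^n+1$ has absolute value $(p-1)p^{(n-2)/2}\cdot(\text{something})$ and a sign governed by a quadratic residue symbol.

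The cleanest route, though, is to bypass the general extension and just pin down the two special cases $n=2k$ and $n=4k$ using the structural lemmas. For $n=2k$: here $k/d$ with $d=(n,k)=k$ is $1$ (odd) and $n/d = 2$ is even, so Lemma \ref{prop-y^p+y} gives $\{y^{p^k}+y : y\in\mathbb F_{p^{2k}}\} = \{y^{p}+y : \ldots\}$... more to the point, the standard fact is that over $\mathbb F_{p^{2k}}$ the map $\Tr_{\mathbb F_{p^{2k}}/\mathbb F_{p^k}}$ interacts with $y^{p^k}+y$ so that $x^2 \in S_k$ iff $\Tr_{\mathbb F_{p^{2k}}/\mathbb F_{p^k}}(x^2)=0$, i.e. iff $x^{p^k}+x \cdot(\ldots)$; one then counts solutions of $x^{p^k+1}\in\ldots$. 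I would instead cite the well-known identification of $H_{k,0}$ over $\mathbb F_{p^{2k}}$ with a quotient of (or curve closely related to) the Hermitian curve $H_k$, whose maximality over $\mathbb F_{p^{2k}}$ is classical, together with the elementary character-sum evaluation showing equality in the Hasse–Weil bound. Since the genus of $H_{k,0}$ is $(p^k-1)/2$, it suffices to show $\#H_{k,0}(\mathbb F_{p^{2k}}) = p^{2k}+1 + (p^k-1)p^k$, which follows from counting: for each of the $p^k$ values $x$ with $x^{p^k}=x$ one gets... — in any case this reduces to a finite, explicit Gauss-sum identity over $\mathbb F_{p^{2k}}$. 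Then minimality over $\mathbb F_{p^{4k}}$ is immediate from Proposition \ref{minimal-prop}(1), since $\mathbb F_{p^{4k}}$ is an even-degree (degree $2$) extension of $\mathbb F_{p^{2k}}$ over which the curve is maximal.

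So the real content is the single claim that $H_{k,0}$ is maximal over $\mathbb F_{p^{2k}}$; the minimality over $\mathbb F_{p^{4k}}$ is a formal consequence. I expect the main obstacle to be the bookkeeping in the Gauss-sum evaluation — specifically, verifying that the quadratic character sum $\sum_{x\in\mathbb F_{p^{2k}}} \psi(\Tr(x^2))$ (or its twist detecting $x^2\in S_k$) has exactly the sign making every Weil number equal to $-p^k$, rather than merely bounding its magnitude. Here one uses that $-1$ is a square in $\mathbb F_{p^{2k}}$ and the Davenport–Hasse / Stickelberger-type sign rule for quadratic Gauss sums over $\mathbb F_{p^{2k}}$; alternatively one avoids signs entirely by computing $\#H_{k,0}(\mathbb F_{p^{2k}})$ combinatorially (summing the fibre sizes $0$ or $p^k$ over the $p^{2k}$ choices of $x$, using $|S_k\cap\{\text{squares}\}|$) and checking the result equals the maximal value $p^{2k}+1+2g\,p^k$.
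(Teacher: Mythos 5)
Your skeleton agrees with the paper's: everything reduces to showing $\#H_{k,0}(\F_{p^{2k}})=p^{2k}+1+(p^k-1)p^k$, after which minimality over $\F_{p^{4k}}$ is immediate from Proposition~\ref{minimal-prop}. The problem is that this central count is never actually carried out. You defer it three separate times --- to ``a finite, explicit Gauss-sum identity'', to ``the Davenport--Hasse / Stickelberger-type sign rule'', and to an unspecified combinatorial count of $|S_k\cap\{\text{squares}\}|$ --- and you yourself flag the sign determination as the main unresolved obstacle. That sign (equivalently, the exact count) is the entire content of the lemma, so as written this is a plan, not a proof. Moreover, the one concrete identification you attempt is wrong: over $\F_{p^{2k}}$ the map $y\mapsto y^{p^k}+y$ \emph{is} the relative trace $\Tr_{\F_{p^{2k}}/\F_{p^k}}$, so its image $S_k$ is all of $\F_{p^k}$; the membership condition is therefore $x^2\in\F_{p^k}$, i.e.\ $x=0$ or $x^{2(p^k-1)}=1$, and \emph{not} ``$\Tr_{\F_{p^{2k}}/\F_{p^k}}(x^2)=0$'' as you assert. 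The latter condition reads $x^{2(p^k-1)}=-1$ for $x\neq 0$ and in general has a different number of solutions, so following it through would give the wrong point count.

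The paper's argument is elementary and needs no Gauss sums: substitute $(x,y)\mapsto(-\mu x,-\mu y)$ with $\mu^{p^k}=-\mu$ to pass to $y^{p^k}-y=\mu x^2$, where a fibre over $x$ is nonempty (of size $p^k$) precisely when $\Tr_{\F_{p^{2k}}/\F_{p^k}}(\mu x^2)=\mu(x^2-x^{2p^k})=0$; since $\gcd\bigl(x^{2(p^k-1)}-1,\,x^{p^{2k}-1}-1\bigr)$ has degree $2(p^k-1)$, there are exactly $1+2(p^k-1)$ such $x$, giving $1+p^k\bigl(1+2(p^k-1)\bigr)=p^{2k}+1+(p^k-1)p^k=p^{2k}+1+2gp^k$. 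You could equally well repair your own version by using the correct condition $x^2\in\F_{p^k}$, which yields the same count of admissible $x$, each with fibre of size $p^k$ (the kernel of the trace). Either way the sign comes for free from an exact root count; no appeal to Proposition~\ref{counts}, quadratic Gauss sums, or the classical maximality of the Hermitian curve is needed, and until one of these counts is actually completed the lemma is not proved.
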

\begin{proof}	
	Let $\mu$ be a nonzero element in $\mathbb F_{p^{2k}}$ such that $\mu^{p^k}=-\mu$. Since $(x,y) \mapsto(-\mu x,-\mu y)$ is an one-to-one map from $\mathbb F_{p^{2k}}$ to $\mathbb F_{p^{2k}}$, we have $\#H_k(\F_{p^{2k}})$ equals the number of $\F_{p^{2k}}$-rational points of $$G_{k,0,\mu}: y^{p^k}-y=\mu x^{2}.$$ We have $$
	\Tr_{\F_{p^{2k}}/\F_{p^k}}(\mu x^{2})=\mu x^{2}+\mu^{p^{k}}x^{2p^k}=\mu (x^2-x^{2p^k})
	$$
	for all $x\in \F_{p^{2k}}$. 	Since $\deg (x^{2(p^k-1)}-1,x^{p^{2k}-1}-1)=2(p^k-1)$, we have that the number of $\mathbb F_{p^{2kt}}$-rational points of $G_{k,d,\mu}$ is $$1+p^k(1+2(p^k-1))=p^{2k}+1+(p^k-1)p^k.$$
	by (\ref{quadpts}). Therefore, we have $$H_{k,t}(\F_{p^{2kt}})=G_{k,t,\mu}(\F_{p^{2kt}})=(p^{2k}+1)-(p^k-1)p^{k}.$$
	Since the genus of the curve $H_{k,0}$ is $(p^k-1)/2$, $H_{k,0}$ is maximal over $\F_{p^{2k}}$ and therefore it is minimal over $\F_{p^{4k}}$ by Proposition \ref{minimal-prop}.
\end{proof}
\begin{lemma}\label{lem-divides-k-0}
	Let $d\mid k$. The number of $\mathbb F_{p^d}$-rational points of $H_{k,0}$ is $p^d+1$.
\end{lemma}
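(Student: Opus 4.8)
The plan is to show that the curve $H_{k,0}: y^{p^k}+y=x^2$ has only the trivial count $p^d+1$ over $\mathbb F_{p^d}$ whenever $d\mid k$, by a direct counting argument. The key observation is that over $\mathbb F_{p^d}$ the map $y\mapsto y^{p^k}$ is the identity, since $p^k\equiv 1$ modulo the order of $\mathbb F_{p^d}^\times$ is not quite right, but rather because $a^{p^d}=a$ for all $a\in\mathbb F_{p^d}$ and $d\mid k$ forces $a^{p^k}=a$ as well. Hence for $y\in\mathbb F_{p^d}$ we have $y^{p^k}+y=2y$, and since $p$ is odd, $2$ is invertible. So the equation defining the affine points becomes $2y=x^2$, i.e. $y=x^2/2$, which has exactly one solution $y$ for each $x\in\mathbb F_{p^d}$, giving $p^d$ affine points.

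First I would make this substitution precise: the affine curve $H_{k,0}$ over $\mathbb F_{p^d}$ is in bijection with $\{(x,y)\in\mathbb F_{p^d}^2 : 2y = x^2\}$, which has cardinality $p^d$ since the projection to the $x$-coordinate is a bijection. Next I would account for the points at infinity. Since $d\mid k$, we have $p^k+1 > 2$ in all cases except we should check: the curve $y^{p^k}+y=x^2$ is a (possibly singular) plane model, and I would use the standard description of its smooth model at infinity — for $t=0$ the relevant Artin–Schreier-type extension $y^{p^k}+y=x^2$ has a single place above $x=\infty$ (the pole order of $x^2$, namely $2$, is coprime to $p$, and the extension is totally ramified there), contributing exactly $1$ rational point at infinity. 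Adding this to the $p^d$ affine points gives $\#H_{k,0}(\mathbb F_{p^d}) = p^d+1$, as claimed.

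The main obstacle I anticipate is the bookkeeping at infinity: one must be careful that the smooth projective model really has a unique, $\mathbb F_{p^d}$-rational place over $x=\infty$ and no other exotic points, and that the plane model's singularities (if any) do not affect the count over $\mathbb F_{p^d}$. This is routine for Artin–Schreier curves $y^{p^k}+y=f(x)$ with $\deg f$ coprime to $p$, but it should be stated cleanly. An alternative, perhaps cleaner, route avoiding the geometry at infinity altogether: use the identity $\#H_{k,0}(\mathbb F_{p^d}) = p^d\cdot N_0 + 1$ from \eqref{quadpts}, where $N_0$ is the number of $x\in\mathbb F_{p^d}$ with $\Tr_{\mathbb F_{p^d}/\mathbb F_{p^k}}$-type condition — but here the trace is onto $\mathbb F_{p^k}\supseteq\mathbb F_{p^d}$, so one instead directly counts $x\in\mathbb F_{p^d}$ with $x^2$ in the image of $y\mapsto y^{p^k}+y$ restricted to $\mathbb F_{p^d}$, which (as above) is all of $\mathbb F_{p^d}$; so $N_0 = p^d/p^{\text{something}}$ needs the correct normalization. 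I would settle on whichever normalization of \eqref{quadpts} the earlier sections fix, and then the count is immediate. Either way, the arithmetic heart of the argument is the single line $y^{p^k}+y = 2y$ over $\mathbb F_{p^d}$.
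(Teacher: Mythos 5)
Your proposal is correct and uses essentially the same argument as the paper: since $d\mid k$ implies $u^{p^k}=u$ for $u\in\mathbb F_{p^d}$, the equation reduces to $2y=x^2$, which has exactly one solution $y$ for each $x$, giving $p^d$ affine points plus the point at infinity. Your extra care about the place at infinity is sound but the paper treats it as implicit.
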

\begin{proof}
	Since $u^{p^{k}}=u$ for each $u\in \mathbb F_{p^d}$, we have $$0=y^{p^k}+y-x^{p^{k}+1}=2y-x^2.$$
	Since for each $x\in \mathbb F_{p^d}$ there exists a unique $y\in \mathbb F_{p^d}$ satisfying the above equality, we have the result.
\end{proof}

\begin{lemma}\label{lem-divides-2k-0}
	Let $d\mid k$ with $2d \nmid k$. The number of $\mathbb F_{p^{2d}}$-rational points of $H_{k,0}$ is $p^{2d}+1+(p^d-1)p^d$.
\end{lemma}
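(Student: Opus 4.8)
The plan is to mimic the structure of the proof of Lemma \ref{lemma-Hk-max-min-x^2}, but with $2d$ playing the role of $2k$ and exploiting the hypothesis $d \mid k$, $2d \nmid k$. First I would observe that since $d \mid k$ we have $\F_{p^d} \subseteq \F_{p^{2d}}$, and since $2d \nmid k$ the quantity $k/d$ is odd, so for $u \in \F_{p^{2d}}$ the Frobenius $u \mapsto u^{p^k}$ acts the same way as $u \mapsto u^{p^d}$ on the subfield generated... more precisely, $u^{p^k} = u^{p^d}$ does not hold in general, but what matters is the image set. Indeed, by Lemma \ref{prop-y^p+y} applied with $n = 2d$ and the role of ``$k$'' played by $k$ itself (here $(2d,k) = d$ since $d\mid k$ and $2d\nmid k$, $k/d$ is odd, $2d/d = 2$ is even), we get
$$\{\,y^{p^k}+y : y \in \F_{p^{2d}}\,\} = \{\,y^{p^d}+y : y \in \F_{p^{2d}}\,\}.$$
Hence $\#H_{k,0}(\F_{p^{2d}}) = \#H_{d,0}(\F_{p^{2d}})$, and it remains to count points on $y^{p^d}+y = x^2$ over $\F_{p^{2d}}$.

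Next I would run the computation already carried out in Lemma \ref{lemma-Hk-max-min-x^2} verbatim with $k$ replaced by $d$: pick $\mu \in \F_{p^{2d}}^\times$ with $\mu^{p^d} = -\mu$ (exists since $2d/d$ is even), use the bijection $(x,y)\mapsto(-\mu x,-\mu y)$ to reduce to counting $G_{d,0,\mu}: y^{p^d}-y = \mu x^2$ over $\F_{p^{2d}}$, compute $\Tr_{\F_{p^{2d}}/\F_{p^d}}(\mu x^2) = \mu(x^2 - x^{2p^d})$, note that the number of $x \in \F_{p^{2d}}$ making this zero is controlled by $\deg(\gcd(x^{2(p^d-1)}-1,\, x^{p^{2d}-1}-1)) = 2(p^d-1)$, and apply \eqref{quadpts} to get $1 + p^d(1 + 2(p^d-1)) = p^{2d}+1+(p^d-1)p^d$.

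Alternatively — and this may be cleaner to write — one can skip Lemma \ref{prop-y^p+y} and argue directly on $H_{k,0}$ over $\F_{p^{2d}}$: apply the bijection $(x,y)\mapsto(-\mu x,-\mu y)$ with $\mu^{p^d}=-\mu$, use $k/d$ odd to get $\mu^{p^k}=-\mu$ as well, reduce to $G_{k,0,\mu}: y^{p^k}-y = \mu x^2$, and compute $\Tr_{\F_{p^{2d}}/\F_{p^d}}(\mu x^2) = \mu x^2 + \mu^{p^k} x^{2p^k}$; since $p^k \equiv p^d \pmod{p^{2d}-1}$ is false in general I would instead use that $x^{2p^k} = (x^{2})^{p^k}$ and that on $\F_{p^{2d}}$ raising to $p^k$ equals raising to $p^{d \cdot (k/d)} = $ an odd power of $p^d$, hence equals raising to $p^d$ composed with $(p^{2d})$-powers which are trivial — so $x^{2p^k} = x^{2p^d}$ for $x \in \F_{p^{2d}}$. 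Then the trace is again $\mu(x^2 - x^{2p^d})$ and the same $\gcd$ count finishes it. The only genuinely non-routine point is this reduction $x^{p^k} = x^{p^d}$ for $x \in \F_{p^{2d}}$, which holds precisely because $k/d$ is odd and $x^{p^{2d}} = x$; everything else is a transcription of the already-established Lemma \ref{lemma-Hk-max-min-x^2} argument, so I expect no serious obstacle.
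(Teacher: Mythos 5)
Your proposal is correct, and your ``alternative'' route --- showing $u^{p^k}=u^{p^d}$ for all $u\in\F_{p^{2d}}$ because $k/d$ is odd, so that the defining equation over $\F_{p^{2d}}$ literally becomes that of $H_{d,0}$, and then invoking the maximality of $H_{d,0}$ over $\F_{p^{2d}}$ from Lemma \ref{lemma-Hk-max-min-x^2} --- is exactly the paper's proof. The first route via Lemma \ref{prop-y^p+y} is an unnecessary detour here (the pointwise identity is stronger than the image-set equality and is available directly), but it also works.
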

\begin{proof}
	Write $k=2^ut$ where $u$ is a non-negative integer and $t$ is odd. There exists an odd $e$ such that $k=d\cdot e$ and $e \mid t$. Since $e$ is odd, there exists an integer $f$ such that $e=2f+1$.
	
	Let $u \in \mathbb F_{p^{2d}}$.  Then $$u^{p^k}=u^{p^{d\cdot e}}=u^{p^{d\cdot(2f+1)}}=u^{p^{2d\cdot f+ d}}=u^{p^d}.$$ Hence we have $$y^{p^k}+y-x^{2}=y^{p^d}+y-x^{2}$$ for all $x,y \in \mathbb F_{p^{2d}}$. Therefore, we have $$\#H_{k,0}(\mathbb F_{p^{2d}})=\#H_{d,0}(\F_{p^{2d}}).$$ Since $H_{d,0}$ is maximal over $\mathbb F_{p^{2d}}$ by Theorem \ref{lemma-Hk-max-min-x^2}, we have the result. 
\end{proof}

\begin{lemma}\label{lem-divides-4k-x^2}
	Let $d\mid 4k$ with $d \nmid 2k$. The number of $\mathbb F_{p^{d}}$-rational points of $H_{k,0}$ is $p^{d}+1+(p^{d/4}-1)p^{d/2}$.
\end{lemma}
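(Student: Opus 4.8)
The plan is to mirror the reduction of Lemma~\ref{lem-divides-2k-0}: collapse the exponent $p^{k}$ down to $p^{d/4}$ over $\F_{p^{d}}$, and then read off the count from the extremal data of $H_{d/4,0}$ recorded in Lemma~\ref{lemma-Hk-max-min-x^2}. First I would unwind the hypotheses. Writing $k=2^{v}w$ with $w$ odd, the two conditions $d\mid 4k$ and $d\nmid 2k$ force $d=2^{v+2}m$ with $m$ odd and $m\mid w$; hence $e:=d/4=2^{v}m$ divides $k$ and the quotient $k/e=w/m$ is odd. In particular $\F_{p^{d}}=\F_{p^{4e}}$ with $e\mid k$, which is exactly the regime covered by the image lemmas of Section~3.

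Next I would prove $\#H_{k,0}(\F_{p^{d}})=\#H_{e,0}(\F_{p^{d}})$. On $\F_{p^{4e}}$ the Frobenius $\phi\colon y\mapsto y^{p^{e}}$ satisfies $\phi^{4}=\id$, and since $k/e$ is odd we have $y^{p^{k}}=\phi^{k/e}(y)=\phi^{\pm1}(y)$. If $k/e\equiv1\pmod 4$ then $y^{p^{k}}+y=y^{p^{e}}+y$ holds identically on $\F_{p^{4e}}$, so the two affine equations literally coincide. If $k/e\equiv3\pmod 4$ then $y^{p^{k}}=y^{p^{3e}}$; raising $y^{p^{3e}}+y=x^{2}$ to the $p^{e}$-th power and using $y^{p^{4e}}=y$ converts it into $y^{p^{e}}+y=(x^{p^{e}})^{2}$, so the bijection $(x,y)\mapsto(x^{p^{e}},y)$ of $\F_{p^{d}}^{2}$ matches the affine $\F_{p^{d}}$-points of $H_{k,0}$ with those of $H_{e,0}$. (Lemma~\ref{prop-y^p+y} with $n=4e$ gives the companion equality of value sets $\{y^{p^{e}}+y\}=\{y^{p^{3e}}+y\}$, which lets one package the two residue cases together.) Because each curve $y^{p^{m}}+y=x^{2}$ has a single rational place at infinity---at $y=\infty$ the function $y^{p^{m}}+y$ has odd valuation $-p^{m}$, so the double cover $x^{2}=y^{p^{m}}+y$ is totally ramified there---the affine equality upgrades to the full equality $\#H_{k,0}(\F_{p^{d}})=\#H_{e,0}(\F_{p^{d}})$.

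The final step is to evaluate $\#H_{e,0}(\F_{p^{4e}})$ from Lemma~\ref{lemma-Hk-max-min-x^2}, which controls $H_{e,0}$ over both $\F_{p^{2e}}$ and $\F_{p^{4e}}$. Since $H_{e,0}$ has genus $(p^{e}-1)/2=(p^{d/4}-1)/2$, an extremal count over $\F_{p^{4e}}$ takes the shape $p^{4e}+1\pm(p^{e}-1)p^{2e}=p^{d}+1\pm(p^{d/4}-1)p^{d/2}$, the plus sign corresponding to the maximal presentation; the target formula is the $+$ case. The main obstacle is exactly this sign: one must certify that over $\F_{p^{d}}=\F_{p^{4e}}$ the reduced curve appears in its maximal guise. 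Concretely I would track how the reduction---the odd quotient $k/e$, and in the $k/e\equiv3\pmod4$ branch the twist $x\mapsto x^{p^{e}}$---acts on the governing quadratic-character sum, in the spirit of \eqref{quadpts} and Proposition~\ref{minimal-prop}, and confirm it does not flip that character. This delicate bookkeeping is where the real work lies, and it is what pins the answer to the stated value $p^{d}+1+(p^{d/4}-1)p^{d/2}$.
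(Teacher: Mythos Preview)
Your reduction step is exactly the paper's: write $d=4e$, note that $e\mid k$ with $k/e$ odd, split into $k/e\equiv 1\pmod 4$ and $k/e\equiv 3\pmod 4$, and in each case identify the $\F_{p^{4e}}$-points of $H_{k,0}$ with those of $H_{e,0}$ (in the second case via the bijection $(x,y)\mapsto (x^{p^{e}},y)$). So far so good, and entirely parallel to the paper's Cases~A and~B.

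The divergence---and the gap---is in your final paragraph. You say the sign must still be determined and propose to ``track how the reduction \ldots\ acts on the governing quadratic-character sum'' to confirm the curve appears in its \emph{maximal} guise over $\F_{p^{4e}}$. But no such bookkeeping is needed, and the conclusion you aim for is wrong. Lemma~\ref{lemma-Hk-max-min-x^2} already asserts outright that $H_{e,0}$ is \emph{minimal} over $\F_{p^{4e}}$; the paper's proof simply quotes this and stops. Minimality gives
\[
\#H_{e,0}(\F_{p^{4e}})=p^{4e}+1-(p^{e}-1)p^{2e}=p^{d}+1-(p^{d/4}-1)p^{d/2},
\]
with a minus sign, not the plus sign printed in the lemma statement. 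That plus sign is a typo: the minus sign is the one consistent both with Lemma~\ref{lemma-Hk-max-min-x^2} and with the case $2^{v+2}\mid d$ of Theorem~\ref{thm-Hk0-point}. Your worry that the Frobenius twist $x\mapsto x^{p^{e}}$ might ``flip a character'' is unfounded in any case---it is a bijection on $\F_{p^{4e}}$-points and cannot alter the count. So drop the proposed character-sum analysis, invoke Lemma~\ref{lemma-Hk-max-min-x^2} directly for minimality, and correct the sign.
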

\begin{proof}
	Write $d=4e$ where $e$ is an integer and $k=2^ut$ where $u$ is a non-negative integer and $t$ is odd. There exists an odd $f$ such that $k=e\cdot f$ and $f \mid t$. Since $f$ is odd, there exists an integer $g$ such that $e=4g+1$ or $e=4g-1$.
	
	\textbf{Case A. $\mathbf{e=4g+1.}$} We have $$u^{p^k}=u^{p^{e\cdot f}}=u^{p^{e\cdot(4g+1)}}=u^{p^{4e\cdot g+ e}}=u^{p^{e}}$$ for all $u \in \mathbb F_{p^{4e}}$. Hence we have $$y^{p^k}+y-x^{p^k+1}=y^{p^e}+y-x^{2}$$ for all $x,y \in \mathbb F_{p^{4e}}$. Therefore, we have $$\#H_{k,0}(\mathbb F_{p^{4e}})=\#H_{e,0}(\F_{p^{4e}}).$$ Since $H_{e,0}$ is minimal over $\mathbb F_{p^{4e}}$ by Lemma \ref{lemma-Hk-max-min-x^2} , we have the result.

	\textbf{Case B. $\mathbf{e=4g-1.}$} We have	$$u^{p^k}=u^{p^{e\cdot f}}=u^{p^{e\cdot(4g-1)}}=u^{p^{4e\cdot (g-1)+3e}}=u^{p^{3e}}$$ for all $u \in \mathbb F_{p^{4e}}$. Hence we have $$y^{p^k}+y-x^{p^k+1}=y^{p^{3e}}+y-x^{p^{3e}+1}=(y^{p^e}+y-x^{p^e+1})^{p^{3e}}$$ for all $x,y \in \mathbb F_{p^{4e}}$. Therefore, we have $$\#H_{k,0}(\mathbb F_{p^{4e}})=\#H_{e,0}(\F_{p^{4e}}).$$ Since $H_{e,0}$ is minimal over $\mathbb F_{p^{d}}$ by Lemma \ref{lemma-Hk-max-min-x^2} , we have the result. 
\end{proof}

\begin{thm}\label{thm-Hk0-point}
	Let $k=2^vt$ where $t$ is an odd integer. Let $n$ be a positive integer with $d=(n,4k)$. Then $$-p^{-n/2}[\#H_{k,0}(\mathbb F_{p^n})-(p^n+1)]=
	\begin{cases}
	0 &\text{ if }\ 2^{v+1} \nmid d,\\ 
	-(p^{d/2}-1)&\text{ if }\ 2^{v+1} \mid\mid d,\\ 
p^{d/4}-1 &\text{ if }\ 2^{v+2} \mid d.
	\end{cases}$$  
\end{thm}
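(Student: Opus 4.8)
The plan is to feed the subfield point counts established in this section (Lemmas~\ref{lem-divides-k-0}--\ref{lem-divides-4k-x^2}) into the reduction theorem for supersingular curves, Theorem~\ref{reduction-thm}. The first thing to check is that $H_{k,0}$ is supersingular: by Lemma~\ref{lemma-Hk-max-min-x^2} it is maximal over $\F_{p^{2k}}$, and a maximal curve is supersingular, so it has a well-defined period $s$; since it is moreover minimal over $\F_{p^{4k}}$ we get $s\mid 4k$ immediately.

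The crux of the argument is to pin down $s=4k$ exactly. Writing $k=2^vt$ with $t$ odd, I would argue in two steps. First, $H_{k,0}$ is maximal but not minimal over $\F_{p^{2k}}$ (its genus $(p^k-1)/2$ is positive since $p$ is odd, so it cannot be both), hence $s\nmid 2k$; together with $s\mid 4k$ this forces $2^{v+2}\mid\mid s$. Second, for each prime $\ell\mid t$ put $d=4k/\ell$; then $d\mid 4k$ but $d\nmid 2k$, so Lemma~\ref{lem-divides-4k-x^2} determines $\#H_{k,0}(\F_{p^{d}})$, and the resulting count carries the factor $p^{k/\ell}-1$ rather than the Hasse--Weil factor $2g=p^{k}-1$ that minimality over $\F_{p^d}$ would require; since $k/\ell<k$ the curve is not minimal over $\F_{p^{4k/\ell}}$, so $s\nmid 4k/\ell$. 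These two steps show that the odd part of $s$ is $t$, whence $s=4k$.

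With $s=4k$ in hand, set $d=(n,4k)$ and $\nu=n/d$ (so $\gcd(\nu,4k/d)=1$) and apply Theorem~\ref{reduction-thm} with $q=p$, $r=1$: then $\#H_{k,0}(\F_{p^n})-(p^n+1)=p^{(n-d)/2}\bigl[\#H_{k,0}(\F_{p^d})-(p^d+1)\bigr]$ when $d$ is even, and equals the same expression times $\left(\frac{(-1)^{(\nu-1)/2}\nu}{p}\right)$ when $d$ is odd with $p\nmid\nu$ (the precise scalar in the odd case will not matter here). Now I would read off $\#H_{k,0}(\F_{p^d})-(p^d+1)$ from the subfield lemmas according to $v_2(d)$, using that $d\mid 4k=2^{v+2}t$ forces $0\le v_2(d)\le v+2$ and the odd part of $d$ to divide $t$: the bracket is $0$ if $2^{v+1}\nmid d$ (equivalently $d\mid k$, by Lemma~\ref{lem-divides-k-0}), it is $(p^{d/2}-1)p^{d/2}$ if $2^{v+1}\mid\mid d$ (then $d\mid 2k$, $d\nmid k$, by Lemma~\ref{lem-divides-2k-0}), and it is $-(p^{d/4}-1)p^{d/2}$ if $2^{v+2}\mid d$ (then $d\nmid 2k$; this is the minimal count of Lemma~\ref{lem-divides-4k-x^2}). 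In the first case the answer is $0$ whether $d$ is odd or even, so the odd-case scalar is harmless; in the other two cases $d$ is even, so the scalar does not occur. Multiplying through by $-p^{-n/2}$ and using $p^{(n-d)/2}p^{d/2}=p^{n/2}$ then yields $0$, $-(p^{d/2}-1)$ and $p^{d/4}-1$ respectively, which is exactly the claimed formula.

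The only genuinely new work is the period computation of the second paragraph; everything after that is a bookkeeping exercise with Theorem~\ref{reduction-thm} and the subfield counts. (If one checks that the proof of Theorem~\ref{reduction-thm} uses only that $X$ is minimal over $\F_{q^s}$ rather than that $s$ is the minimal period, one can take $s=4k$ directly from Lemma~\ref{lemma-Hk-max-min-x^2} and skip that step.)
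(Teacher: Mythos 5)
Your proposal is correct and follows essentially the same route as the paper, whose proof of this theorem is a one-line citation of Lemmas \ref{lem-divides-k-0}, \ref{lem-divides-2k-0} and \ref{lem-divides-4k-x^2} combined implicitly with Theorem \ref{reduction-thm} (exactly as the paper does explicitly for Theorem \ref{thm-Hkt-point}). The only addition is your careful verification that the period is exactly $4k$, a point the paper leaves unstated but which is indeed needed to identify $m=\gcd(n,s)$ with $d=(n,4k)$; your argument for it is sound.
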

\begin{proof}
It follows by Lemma \ref{lem-divides-k-0}, \ref{lem-divides-2k-0} and \ref{lem-divides-4k-x^2}.
\end{proof}
\section{The Number of $\mathbb F_{p^n}$-Rational Points of $H_{k,t}$ where $p$ is odd}\label{sectHk}

Let $p$ be an odd prime. In this section we will find the number of rational points of $H_{k,t}$ over $\F_{p^n}$ for all $n\ge 1$ where $k$ and $t$ are positive integers and mainly prove the Theorem \ref{thm-Hkt-point} for odd primes.
\begin{lemma}\label{lemma-Hk-max-min-odd}
	Let $t$ be an odd integer. Then the curve $H_{k,t}$ is maximal over $\F_{p^{2kt}}$ and minimal over $\F_{p^{4kt}}$.
\end{lemma}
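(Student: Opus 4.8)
The plan is to follow the template of Lemma~\ref{lemma-Hk-max-min-x^2} (the case $t=0$): compute $\#H_{k,t}(\F_{p^{2kt}})$ exactly, and then deduce maximality by comparison with the Hasse--Weil bound. First, since $p$ is odd there is a $c\in\F_{p^{2k}}^\times$ with $c^{p^k}=-c$, and $\F_{p^{2k}}\subseteq\F_{p^{2kt}}$; the linear change of coordinates $y\mapsto c^{-1}y$ (with $x$ unchanged) transforms the equation $y^{p^k}+y=x^{p^{kt}+1}$ into $y^{p^k}-y=\mu x^{p^{kt}+1}$, where $\mu=-c^{-1}$ again satisfies $\mu^{p^k}=-\mu$. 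This is a bijection of $\F_{p^{2kt}}^{2}$ fixing the point at infinity, so $\#H_{k,t}(\F_{p^{2kt}})$ equals the number of $\F_{p^{2kt}}$-rational points of $G:y^{p^k}-y=\mu x^{p^{kt}+1}$, which by \eqref{quadpts} is $p^{k}N+1$ with $N=\#\{x\in\F_{p^{2kt}}:\Tr_{\F_{p^{2kt}}/\F_{p^k}}(\mu x^{p^{kt}+1})=0\}$.

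The crux is to show $N=p^{2kt}$, i.e.\ that this trace vanishes identically; this is precisely where the hypothesis that $t$ is odd is used, in the same spirit as the condition ``$k/d$ odd'' in Lemma~\ref{prop-y^p+y}. Factoring the trace as $\Tr_{\F_{p^{2kt}}/\F_{p^k}}=\Tr_{\F_{p^{kt}}/\F_{p^k}}\circ\Tr_{\F_{p^{2kt}}/\F_{p^{kt}}}$ and using $x^{p^{2kt}}=x$ on $\F_{p^{2kt}}$, the inner trace is $\Tr_{\F_{p^{2kt}}/\F_{p^{kt}}}(\mu x^{p^{kt}+1})=\mu x^{p^{kt}+1}+\mu^{p^{kt}}x^{p^{kt}+1}=(\mu+\mu^{p^{kt}})x^{p^{kt}+1}$. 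Now $\mu^{p^{2k}}=\mu$, and since $t$ is odd the $p^{k}$-power Frobenius applied $t$ times gives $\mu^{p^{kt}}=\mu^{p^{k}}=-\mu$; hence the inner trace is $0$ for every $x$, and so is the full trace to $\F_{p^{k}}$. Therefore $\#H_{k,t}(\F_{p^{2kt}})=p^{k}\cdot p^{2kt}+1=p^{(2t+1)k}+1$.

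Finally I would compare with the genus. Since $(p,t)\neq(2,0)$ we have $g=p^{kt}(p^k-1)/2$, so the maximum possible value of $\#X(\F_{p^{2kt}})$ is $p^{2kt}+1+2g\cdot p^{kt}=p^{2kt}+1+p^{kt}(p^k-1)p^{kt}=p^{(2t+1)k}+1$, matching the count above; hence $H_{k,t}$ is maximal over $\F_{p^{2kt}}$. Minimality over $\F_{p^{4kt}}=\F_{(p^{2kt})^{2}}$ then follows from Proposition~\ref{minimal-prop}(1). The only non-computational points requiring attention are the legitimacy of the coordinate change and the (standard) fact, already implicit in the genus formula stated in the introduction, that $H_{k,t}$ has a single $\F_p$-rational point at infinity, supplying the ``$+1$''; everything else is routine finite-field arithmetic, and the main obstacle is really just the trace computation in the second paragraph.
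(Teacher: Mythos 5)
Your proposal is correct and follows essentially the same route as the paper: the same substitution reducing $y^{p^k}+y=x^{p^{kt}+1}$ to $y^{p^k}-y=\mu x^{p^{kt}+1}$ with $\mu^{p^k}=-\mu$, the same factorization of the trace through $\F_{p^{kt}}$ showing it vanishes identically because $\mu^{p^{kt}}=-\mu$ for odd $t$, the count $p^k\cdot p^{2kt}+1$ via \eqref{quadpts}, comparison with the Hasse--Weil bound, and Proposition~\ref{minimal-prop} for minimality over $\F_{p^{4kt}}$. (A trivial bookkeeping slip: your substitution $y\mapsto c^{-1}y$ yields $\mu=-c$ rather than $-c^{-1}$, but either element satisfies $\mu^{p^k}=-\mu$, which is all the argument uses.)
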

\begin{proof}
	Let $\mu$ be a nonzero element in $\mathbb F_{p^{2k}}$ such that $\mu^{p^k}=-\mu.$ Since $(x,y)\mapsto(\mu x,\mu y)$ is an one-to-one map from $\mathbb F_{p^{2kt}}$ to $\mathbb F_{p^{2kt}}$, we have $\#H_k(\F_{p^{2kt}})$ equals to number of $\F_{p^{2kt}}$-rational points of $$G_{k,t,\mu}: y^{p^k}-y=\mu x^{p^{kt}+1}.$$ Since \begin{align*}
	\Tr_{\F_{p^{2kt}}/\F_{p^k}}(\mu x^{p^{kt}+1})
	&=\Tr_{\F_{p^{kt}}/\F_{p^k}}[\Tr_{\F_{p^{2kt}}/\F_{p^{kt}}}(\mu x^{p^{kt}+1}+\mu^{p^{kt}}x^{p^{2kt}+p^{kt}})]\\
	&=\Tr_{\F_{p^{kt}}/\F_{p^k}}(x^{p^{kt}+1}(\mu+\mu^{p^{kt}}))\\
	&=\Tr_{\F_{p^{kt}}/\F_{p^k}}(0)\\
	&=0
	\end{align*}
	for all $x\in \F_{p^{2k}}$, by (\ref{quadpts}) we have $$H_{k,t}(\F_{p^{2kt}})=G_{k,t,\mu}(\F_{p^{2kt}})=1+p^{2kt}\cdot p^{k}=(p^{2kt}+1)+p^{kt}(p^k-1)\sqrt{p^{2kt}}.$$ Since the genus of the curve $H_{k,t}$ is $p^{kt}(p^k-1)/2$, $H_{k,t}$ is maximal over $\F_{p^{2kt}}$ and therefore it is minimal over $\F_{p^{4kt}}$ by Proposition \ref{minimal-prop}.
\end{proof}

\begin{lemma}\label{lemma-Hk-max-min-even}
	Let $t$ be an even integer. Then the number of rational points of $H_{k,t}$ over $\F_{p^{2kt}}$ is $p^{2kt}+1-(p^k-1)p^{kt}$ and $H_{k,t}$ is minimal over $\F_{p^{4kt}}$.
\end{lemma}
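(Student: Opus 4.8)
The plan is to follow the proof of Lemma \ref{lemma-Hk-max-min-odd}. Fix $\mu\in\F_{p^{2k}}^{\times}$ with $\mu^{p^k}=-\mu$; exactly as there, for $M\in\{2kt,4kt\}$ the bijection $(x,y)\mapsto(\mu x,\mu y)$ of $\F_{p^M}\times\F_{p^M}$ identifies $H_{k,t}$ with a curve $G\colon y^{p^k}-y=\mu x^{p^{kt}+1}$ (up to replacing $\mu$ by $-\mu^{-1}$, which again satisfies $(\cdot)^{p^k}=-(\cdot)$). The one point at which the parity of $t$ matters is this: for \emph{even} $t$ we have $kt\equiv0\pmod{2k}$, so $\mu\in\F_{p^{kt}}$ and $\mu^{p^{kt}}=\mu$, whereas for odd $t$ one gets $\mu^{p^{kt}}=-\mu$, which is precisely why the relevant trace vanished in Lemma \ref{lemma-Hk-max-min-odd}. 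By \eqref{quadpts} (with $q=p^k$),
\[
\#H_{k,t}(\F_{p^{M}})=1+p^{k}\cdot\#\bigl\{x\in\F_{p^{M}}:\Tr_{\F_{p^{M}}/\F_{p^k}}\!\bigl(\mu x^{p^{kt}+1}\bigr)=0\bigr\},
\]
so everything reduces to these two trace counts.

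For $M=2kt$ I would compute the trace through $\F_{p^k}\subseteq\F_{p^{kt}}\subseteq\F_{p^{2kt}}$. Put $q=p^{kt}$; for $x\in\F_{q^2}$ one has $x^{q+1}=x\cdot x^{q}=N_{\F_{q^2}/\F_q}(x)\in\F_q$, so (using $\mu\in\F_q$) the inner trace is $\Tr_{\F_{q^2}/\F_q}(\mu x^{q+1})=2\mu\,N_{\F_{q^2}/\F_q}(x)$. The count therefore equals $\#\{x\in\F_{q^2}:\Tr_{\F_q/\F_{p^k}}(2\mu\,N_{\F_{q^2}/\F_q}(x))=0\}$. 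Now $N_{\F_{q^2}/\F_q}\colon\F_{q^2}\to\F_q$ is onto with all nonzero fibres of size $q+1$, and $z\mapsto\Tr_{\F_q/\F_{p^k}}(2\mu z)$ is a surjective $\F_p$-linear map onto $\F_{p^k}$ (here $p$ odd and $\mu\neq0$ are used), so its kernel has $p^{k(t-1)}$ elements. Hence the count is $1+(q+1)\bigl(p^{k(t-1)}-1\bigr)=p^{2kt-k}-p^{kt}+p^{kt-k}$, and $\#H_{k,t}(\F_{p^{2kt}})=1+p^{k}\bigl(p^{2kt-k}-p^{kt}+p^{kt-k}\bigr)=p^{2kt}+1-(p^k-1)p^{kt}$, as claimed.

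For $M=4kt$ I would repeat the computation one level higher, through $\F_{p^k}\subseteq\F_{p^{kt}}\subseteq\F_{p^{2kt}}\subseteq\F_{p^{4kt}}$. Let $\sigma$ be the $q$-power Frobenius, so $\sigma^4=\id$ on $\F_{q^4}=\F_{p^{4kt}}$; then the inner trace telescopes:
\[
\Tr_{\F_{q^4}/\F_q}(\mu x^{q+1})=\mu\sum_{i=0}^{3}\sigma^i(x)\,\sigma^{i+1}(x)=\mu\bigl(x+\sigma^2x\bigr)\bigl(\sigma x+\sigma^3x\bigr)=\mu\,N_{\F_{q^2}/\F_q}\!\bigl(\Tr_{\F_{q^4}/\F_{q^2}}(x)\bigr).
\]
Since $\Tr_{\F_{q^4}/\F_{q^2}}$ is onto with fibres of size $q^2$, the same two reductions as above give $\#H_{k,t}(\F_{p^{4kt}})=1+p^{k}\cdot q^{2}\bigl(p^{2kt-k}-p^{kt}+p^{kt-k}\bigr)=p^{4kt}+1-(p^k-1)p^{3kt}$. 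Since $H_{k,t}$ has genus $g=p^{kt}(p^k-1)/2$, this is exactly $p^{4kt}+1-2g\sqrt{p^{4kt}}$, the Hasse--Weil lower bound over $\F_{p^{4kt}}$, so $H_{k,t}$ is minimal there.

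These are routine calculations once the telescoping identity $\sum_{i=0}^{3}\sigma^i(x)\sigma^{i+1}(x)=(x+\sigma^2x)(\sigma x+\sigma^3x)$ and the fibre sizes of the norm and trace maps are in hand; the one thing that genuinely needs care is checking that every appeal to ``$t$ even'' — namely $\mu\in\F_{p^{kt}}$ and $\mu^{p^{kt}}=\mu$ — is justified, since this is the sole structural difference from Lemma \ref{lemma-Hk-max-min-odd}. As an alternative to the second step, one may instead note that the $\F_p$-morphism $(X,Y)\mapsto\bigl(X,\sum_{j=0}^{t-1}(-1)^jY^{p^{kj}}\bigr)$ maps the curve $Y-Y^{p^{kt}}=X^{p^{kt}+1}$ onto $H_{k,t}$ and that this source curve is isomorphic over $\F_{p^{4kt}}$ to the Hermitian curve $H_{kt,1}$, whence minimality of $H_{k,t}$ over $\F_{p^{4kt}}$ follows from Theorem \ref{chapman:KleimanSerre}, Proposition \ref{minimal-prop}, and the maximality of $H_{kt,1}$ over $\F_{p^{2kt}}$.
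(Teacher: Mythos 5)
Your proof is correct. For the count over $\F_{p^{2kt}}$ you do essentially what the paper does: pass to $y^{p^k}-y=\mu x^{p^{kt}+1}$ via the scaling by $\mu$, observe that evenness of $t$ forces $\mu\in\F_{p^{kt}}$ so the inner trace becomes $2\mu\,N_{\F_{p^{2kt}}/\F_{p^{kt}}}(x)$ rather than $0$, and count via the fibres of the norm and of a surjective linear form. Where you genuinely diverge is the minimality over $\F_{p^{4kt}}$: the paper treats $Q(x)=\Tr_{\F_{p^{4kt}}/\F_{p^k}}(\mu x^{p^{kt}+1})$ as a quadratic form, computes the radical of its polarization ($\dim_{\F_{p^k}}W=2t$), invokes Proposition \ref{counts} to conclude the count is $p^{4kt}+1\pm(p^k-1)p^{3kt}$, and then eliminates the maximal case with Proposition \ref{pureimag}; you instead evaluate the count exactly by the factorization $\Tr_{\F_{q^4}/\F_q}(\mu x^{q+1})=\mu\,N_{\F_{q^2}/\F_q}\bigl(\Tr_{\F_{q^4}/\F_{q^2}}(x)\bigr)$ (which I checked: both sides expand to $\mu(x\sigma x+\sigma x\sigma^2x+\sigma^2x\sigma^3x+x\sigma^3x)$), reducing to the $M=2kt$ computation and determining the sign outright. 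Your route is more self-contained — it avoids the quadratic-form rank machinery and the appeal to Proposition \ref{pureimag} — at the cost of the ad hoc telescoping identity, which does not obviously generalize beyond the extension degree $4$ situation the way the rank argument does. The closing ``alternative'' via the cover $Y-Y^{p^{kt}}=X^{p^{kt}+1}\to H_{k,t}$ is plausible but under-justified as written: the asserted isomorphism with $H_{kt}$ over $\F_{p^{4kt}}$ requires checking that the relevant constant is a $(p^{kt}+1)$-st power in $\F_{p^{4kt}}$ (it is, since $(q^4-1)/(q+1)=(q-1)(q^2+1)$ has even quotient by the order considerations), so if you keep that remark you should supply this verification; as it stands the main argument does not depend on it.
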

\begin{proof}
	Let $\mu$ be a nonzero element in $\mathbb F_{p^{2k}}$ such that $\mu^{p^k}=-\mu$ as in Lemma \ref{lemma-Hk-max-min-odd}. Since $(-\mu x,-\mu y)$ is an one-to-one map from $\mathbb F_{p^{2kt}}$ to $\mathbb F_{p^{2kt}}$, we have $\#H_k(\F_{p^{2kt}})$ equals to number of $\F_{p^{2kt}}$-rational points of $$G_{k,t,\mu}: y^{p^k}-y=\mu x^{p^{kt}+1}.$$ We have \begin{align*}
	\Tr_{\F_{p^{2kt}}/\F_{p^k}}(\mu x^{p^{kt}+1})
	&=\Tr_{\F_{p^{kt}}/\F_{p^k}}[\Tr_{\F_{p^{2kt}}/\F_{p^{kt}}}(\mu x^{p^{kt}+1}+\mu^{p^{kt}}x^{p^{2kt}+p^{kt}})]\\
	&=\Tr_{\F_{p^{kt}}/\F_{p^k}}(x^{p^{kt}+1}(\mu+\mu^{p^{kt}}))\\
	&=\Tr_{\F_{p^{kt}}/\F_{p^k}}(2\mu x^{p^{kt}+1})
	\end{align*}
	for all $x\in \F_{p^{2k}}$. 	Since $x\to x^{p^{kt}+1}$ is $p^{kd}+1$-to-$1$ map from $\mathbb F_{p^{2kt}}^\times$ to $\mathbb F_{p^{kt}}^\times$ and since $\Tr_{\F_{p^{kt}}/\F_{p^k}}(x)$ is a linear map from $\mathbb F_{p^{2kt}}$ to $\mathbb F_{p^k}$, we have that the number of $\mathbb F_{p^{2kt}}$-rational points of $G_{k,d,\mu}$ is $$1+p^k(1+(p^{kt}+1)(p^{k-1}-1))=(p^{2kt}+1)-(p^k-1)p^{kt}.$$
	by (\ref{quadpts}). Therefore, we have $$H_{k,t}(\F_{p^{2kt}})=G_{k,t,\mu}(\F_{p^{2kt}})=(p^{2kt}+1)-(p^k-1)p^{kt}.$$
	
	Let $Q(x)=\Tr_{\F_{p^{4kt}}/\F_{p^{k}}}(\mu x^{p^{kt}+1})$. Then \begin{align*}
	B(x,y)
	&=Q(x+y)-Q(x)-Q(y)\\
	&=\Tr_{\F_{p^{4kt}}/\F_{p^{k}}}(\mu(xy^{p^{kt}}+yx^{p^{kt}}))\\
	&=\Tr_{\F_{p^{4kt}}/\F_{p^{k}}}(\mu y^{p^{kt}}(x+x^{p^{2kt}}))
	\end{align*} is a bilinear form over $\mathbb F_{p^k}$.
	Then the radical of $B$ is $$W=\{x\in \F_{p^{4kt}} \ | \ x^{p^{2kt}}+x=0 \}.$$ Since $x^{p^{2kt}}+x$ divides $x^{p^{4kt}}-x$, we have $\dim_{\F_{p^k}} W=2t$. Since $4t-\dim_{\F_{p^k}} W=2t$ is even, by Proposition \ref{counts}, we have $$N=p^{4kt-k}\pm p^{k(4t-2+2t)/2}=p^{n(k-1)}\pm (p^k-1)p^{k(3t-1)}.$$ Hence by (\ref{quadpts}) we have $$\#H_{k,t}(\F_{p^{4kt}})=p^{4kt}+1\pm (p^k-1)p^{3kt}=p^{4kt}+1\pm (p^k-1)p^{kt}\sqrt{p^{4kt}}.$$
	Therefore, $H_{k,t}$ is either maximal or minimal over $\F_{p^{4kt}}$. It cannot be maximal by Proposition \ref{pureimag}. Therefore, it is minimal over $\F_{p^{4kt}}$.
\end{proof}

\begin{lemma}\label{lem-divides-k}
	Let $d\mid kt$. The number of $\mathbb F_{p^d}$-rational points of $H_{k,t}$ equals to the number of $\mathbb F_{p^d}$-rational points of $H_{k,0}$.
\end{lemma}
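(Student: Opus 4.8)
The statement to prove is Lemma \ref{lem-divides-k}: if $d \mid kt$, then $\#H_{k,t}(\mathbb F_{p^d}) = \#H_{k,0}(\mathbb F_{p^d})$. The plan is to compare the defining equations $y^{p^k}+y = x^{p^{kt}+1}$ and $y^{p^k}+y = x^2$ directly over the field $\mathbb F_{p^d}$, exploiting the hypothesis $d \mid kt$. First I would observe that for every $x \in \mathbb F_{p^d}$ we have $x^{p^{kt}} = x$, since $d \mid kt$ forces $p^{kt} \equiv 1 \pmod{p^d - 1}$ (and the identity $x^{p^d}=x$ holds on $\mathbb F_{p^d}$). Consequently $x^{p^{kt}+1} = x^{p^{kt}}\cdot x = x\cdot x = x^2$ for all $x \in \mathbb F_{p^d}$.

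Therefore the equation cut out by $H_{k,t}$ on the affine plane $\mathbb F_{p^d}^2$, namely $y^{p^k}+y = x^{p^{kt}+1}$, coincides pointwise with the equation $y^{p^k}+y=x^2$ cutting out $H_{k,0}$; the two affine curves have exactly the same $\mathbb F_{p^d}$-points. One then needs to check that the points at infinity also match up: for both curves the number of $\mathbb F_{p^d}$-rational places above infinity is the same (in each case there is a single point at infinity, totally ramified in the $y$-direction, and it is rational over the prime field), so the equality of affine point counts upgrades to equality of the full projective counts $\#H_{k,t}(\mathbb F_{p^d}) = \#H_{k,0}(\mathbb F_{p^d})$. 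This is the entire argument; there is essentially no obstacle, as the claim is a routine consequence of the congruence $p^{kt}\equiv 1 \pmod{p^d-1}$.

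The only point requiring a little care — and hence the "main obstacle," such as it is — is handling the behaviour at infinity uniformly, and more precisely making sure that the passage from the affine equation to the smooth projective model $H_{k,t}$ does not introduce a discrepancy: one should note that $x \mapsto x^{p^{kt}+1}$ and $x \mapsto x^2$, while agreeing as functions on $\mathbb F_{p^d}$, are different polynomials, so the argument is genuinely about $\mathbb F_{p^d}$-rational points and not about isomorphism of curves. Since both $H_{k,t}$ and $H_{k,0}$ are the smooth models of Artin–Schreier covers of the projective line ramified only over $x=\infty$, with a single rational place above $x=\infty$ in each case, the affine count plus one gives the projective count in both cases, and the lemma follows.
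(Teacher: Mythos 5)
Your proposal is correct and matches the paper's own argument: the paper likewise observes that $u^{p^{kt}}=u$ for all $u\in\mathbb F_{p^d}$ since $d\mid kt$, so the defining equations $y^{p^k}+y=x^{p^{kt}+1}$ and $y^{p^k}+y=x^2$ have identical solution sets over $\mathbb F_{p^d}$. Your extra care about the points at infinity is a reasonable addition but not something the paper dwells on.
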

\begin{proof}
	Since $u^{p^{kt}}=u$ for each $u\in \mathbb F_{p^d}$, we have $$0=y^{p^k}+y-x^{p^{kt}+1}=y^{p^k}+y-x^2.$$
	Therefore, we have the result.
\end{proof}

\begin{lemma}\label{lem-divides-2k}
	Let $d\mid kt$ with $2d \nmid kt$ and let $(k,d)=c$. The number of $\mathbb F_{p^{2d}}$-rational points of $H_{k,t}$ equals to he number of $\mathbb F_{p^{2d}}$-rational points of $H_{c,d/c}$.
\end{lemma}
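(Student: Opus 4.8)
The plan is to imitate the argument of Lemma~\ref{lem-divides-2k-0}, replacing the role of $k$ there by the pair $(k,t)$ here: I want to show that over $\mathbb F_{p^{2d}}$ the curve $H_{k,t}$ literally coincides (as a plane curve, hence in point count) with $H_{c,d/c}$. First I would record the relevant divisibility facts: since $d\mid kt$ but $2d\nmid kt$, the $2$-adic valuation of $d$ exceeds that of $kt/d$ in the sense that $kt/d$ is odd; more precisely, writing everything in terms of $2$-adic valuations one gets that $(2d, kt)=d$ and that $kt/d$ is an odd integer. With $c=(k,d)$, set $k=c k'$ and $d=c d'$ with $(k',d')=1$. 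The key elementary claim is that, for every $u\in\mathbb F_{p^{2d}}$, one has $u^{p^{kt}}=u^{p^{s}}$ and $u^{p^{k}}=u^{p^{c}}$ for appropriate small exponents; this reduces the exponents $p^{kt}$ and $p^{k}$ appearing in $H_{k,t}$ modulo the relation $u^{p^{2d}}=u$.

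Concretely, the main steps are: (1) Show $u^{p^{k}}=u^{p^{c}}$ for all $u\in\mathbb F_{p^{2d}}$. Here $k\equiv \pm c \pmod{2d}$ is what one needs; since $k/c=k'$ and $(k',2d/c)$ has the same parity considerations as in Lemma~\ref{lem-divides-2k-0}, either $k'$ is odd and $k\equiv c\cdot(\text{odd})$, or one must track whether $k'$ is even. Actually the cleaner route: the Frobenius $x\mapsto x^p$ has order $2d$ on $\mathbb F_{p^{2d}}$, so $u^{p^k}=u^{p^{k\bmod 2d}}$; I would show $k\bmod 2d$ equals $c$ (using $c=(k,d)$ and that $k/d$-type ratios are odd). (2) Likewise show $x^{p^{kt}}=x^{p^{d}}$ for all $x\in\mathbb F_{p^{2d}}$, i.e. $kt\equiv d\pmod{2d}$; this is exactly the hypothesis $d\mid kt$, $2d\nmid kt$, which forces $kt\equiv d\pmod{2d}$. (3) Conclude that on $\mathbb F_{p^{2d}}$,
\[
y^{p^{k}}+y-x^{p^{kt}+1}=y^{p^{c}}+y-x^{p^{d}+1},
\]
so the affine solution sets agree, and the curves $H_{k,t}$ and $H_{c,d/c}$ have the same number of $\mathbb F_{p^{2d}}$-points (note $c\cdot(d/c)=d$, so the right-hand curve is indeed $H_{c,d/c}$, with $\mathbb F_{p^{2d}}=\mathbb F_{p^{2c(d/c)}}$ sitting as its natural ``maximal'' field).

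The step I expect to be the main obstacle is step~(1): controlling $k\bmod 2d$ requires knowing that the odd part of $k$ relative to $c$ behaves correctly, and one has to be careful that $c=(k,d)$ rather than $(k,2d)$. The resolution is a $2$-adic valuation bookkeeping argument identical in spirit to the one in Lemma~\ref{lem-divides-2k-0} (writing $k=2^{u}t$, $d$ with $v_2(d)\le u$, etc.), together with the observation that $2d\nmid kt$ pins down $v_2(kt)=v_2(d)$ exactly; once $v_2$'s are matched one checks $k\equiv c\pmod{2d}$ and $kt\equiv d\pmod{2d}$ by a direct congruence computation. After that, steps (2) and (3) are immediate substitutions, and the point-count equality follows with no reference to genus or $L$-polynomials.
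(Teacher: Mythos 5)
Your step (2) (the reduction $x^{p^{kt}}=x^{p^{d}}$ on $\mathbb F_{p^{2d}}$, valid because $kt=de$ with $e$ odd forces $kt\equiv d\pmod{2d}$) agrees with the paper. But step (1) contains a genuine error: the congruence $k\equiv c\pmod{2d}$ that you need for the pointwise identity $u^{p^{k}}=u^{p^{c}}$ on $\mathbb F_{p^{2d}}$ is false in general. Take $k=6$, $t=2$, $d=4$ (so $d\mid kt=12$ and $2d=8\nmid 12$), giving $c=(6,4)=2$; then $k\bmod 2d=6\neq 2=c$, and since the Frobenius generates a cyclic group of order $2d$ on $\mathbb F_{p^{2d}}$, the maps $u\mapsto u^{p^{6}}$ and $u\mapsto u^{p^{2}}$ are genuinely different automorphisms of $\mathbb F_{p^{8}}$ (indeed $y^{p^{6}}-y^{p^{2}}=(y^{p^{4}}-y)^{p^{2}}\neq 0$ for $y\notin\mathbb F_{p^{4}}$). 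No $2$-adic bookkeeping rescues this: the hypotheses only yield $v_2(k)\le v_2(d)=v_2(kt)$, which controls $(k,2d)$ but not $k$ modulo $2d$. So the two plane curves do not literally coincide over $\mathbb F_{p^{2d}}$, and your step (3) does not follow as stated.

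The paper's proof circumvents exactly this obstacle by replacing your pointwise identity with a set identity. After reducing the right-hand side to $x^{p^{d}+1}$, it observes that $(2d,k)=(d,k)=c$, that $k/c$ is odd and $2d/c$ is even (these do follow from $v_2(k)\le v_2(d)$), and then applies Lemma \ref{prop-y^p+y} to conclude that the additive maps $y\mapsto y^{p^{k}}+y$ and $z\mapsto z^{p^{c}}+z$ have the \emph{same image} in $\mathbb F_{p^{2d}}$, even though they are not equal as maps. Two additive maps on $\mathbb F_{p^{2d}}$ with equal images have kernels of equal size, so for each $x$ the fibre of solutions $y$ of $y^{p^{k}}+y=x^{p^{d}+1}$ has the same cardinality as the fibre of solutions $z$ of $z^{p^{c}}+z=x^{p^{d}+1}$; summing over $x$ gives $\#H_{k,t}(\mathbb F_{p^{2d}})=\#H_{c,d/c}(\mathbb F_{p^{2d}})$. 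This image-equality argument is the missing idea in your proposal, and you should substitute it for your step (1).
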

\begin{proof}
Write $kt=2^uv$ where $u$ is a non-negative integer and $v$ is odd. There exists and odd $e$ such that $kt=d\cdot e$ and $e \mid v$. Since $e$ is odd, there exists an integer $f$ such that $e=2f+1$.

Let $u \in \mathbb F_{p^{2d}}$.  Then $$u^{p^{kt}}=u^{p^{d\cdot e}}=u^{p^{d\cdot(2f+1)}}=u^{p^{2d\cdot f+ d}}=u^{p^d}.$$ Hence we have $$y^{p^k}+y-x^{p^{kt}+1}=y^{p^k}+y-x^{p^d+1}$$ for all $x,y \in \mathbb F_{p^{2d}}$. Since $(2d,k)=(d,k)$ and since $2d/c$ is even and $k/c$ is odd, by Proposition \ref{prop-y^p+y}, there exists $z\in \mathbb F_{p^{2kd}}$ such that  $$y^{p^k}+y-x^{p^{d}+1}=z^{p^c}+z-x^{p^d+1}$$ for all $x,y \in \mathbb F_{p^{2d}}$. Therefore, we have $\#H_{k,2d}(\mathbb F_{p^{2d}})=\#H_{c,d/c}(\F_{p^{2d}})$.
\end{proof}

\begin{lemma}\label{lem-divides-4k}
	Let $d\mid 4kt$ with $d \nmid 2kt$ and let $(k,d)=c$. The number of $\mathbb F_{p^{d}}$-rational points of $H_{k,t}$ equals to he number of $\mathbb F_{p^{d}}$-rational points of $H_{c,d/4c}$.
\end{lemma}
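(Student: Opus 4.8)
The plan is to follow the pattern of Lemmas~\ref{lem-divides-4k-x^2} and~\ref{lem-divides-2k}: over $\F_{p^d}$ we reduce the Frobenius exponents appearing in the defining equation $y^{p^k}+y=x^{p^{kt}+1}$ until the curve takes the shape $y^{p^c}+y=x^{p^{d/4}+1}$, which is exactly $H_{c,d/4c}$ since $c\cdot(d/4c)=d/4$. The first step is the $2$-adic bookkeeping. Writing $k=2^vw$, $t=2^bt'$ and $d=2^ad'$ with $w,t',d'$ odd, the hypothesis $d\mid 4kt$ gives $d'\mid wt'$ and $a\le v+b+2$, and then $d\nmid 2kt$ forces $a=v+b+2$; hence $4\mid d$, the $2$-part of $c=(k,d)$ equals $2^v$, so $c=2^v(w,d')$ and $4c\mid d$. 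In particular $d/4c$ is a positive integer, $H_{c,d/4c}$ is defined, and its affine model is $y^{p^c}+y=x^{p^{d/4}+1}$. The same computation gives $kt=(d/4)\cdot(wt'/d')$ with $wt'/d'$ an odd integer, so $kt\equiv d/4$ or $3d/4\pmod d$.

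Next I would reduce the exponent of $x$. Since $x^{p^d}=x$ for $x\in\F_{p^d}$, we have $x^{p^{kt}+1}=x^{p^j+1}$ with $j\in\{d/4,\,3d/4\}$. If $j=d/4$ this is already $x^{p^{d/4}+1}$; if $j=3d/4$ I would use the identity $x^{p^{3d/4}+1}=\bigl(x^{p^{3d/4}}\bigr)^{p^{d/4}+1}$, which again holds over $\F_{p^d}$ because $x^{p^d}=x$ (this is the analogue of Case~B of Lemma~\ref{lem-divides-4k-x^2}). As $x\mapsto x^{p^{3d/4}}$ is a bijection of $\F_{p^d}$, it follows in either case that, for every subset $A\subseteq\F_{p^d}$,
$$\#\{x\in\F_{p^d}:x^{p^{kt}+1}\in A\}=\#\{x\in\F_{p^d}:x^{p^{d/4}+1}\in A\}.$$

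Then I would reduce the $y$-side. Since $(d,k)=c$ with $k/c=w/(w,d')$ odd and $d/c$ even, Lemma~\ref{prop-y^p+y} gives
$$S:=\{y^{p^k}+y:y\in\F_{p^d}\}=\{z^{p^c}+z:z\in\F_{p^d}\}.$$
Both additive maps $y\mapsto y^{p^k}+y$ and $z\mapsto z^{p^c}+z$ are $\F_{p^c}$-linear endomorphisms of $\F_{p^d}$ with kernel of cardinality $p^c$ (here one uses $2c\mid d$), hence each is $p^c$-to-one onto $S$. Counting the affine $\F_{p^d}$-points of each curve fibrewise over $x$ yields $p^c\cdot\#\{x:x^{p^{kt}+1}\in S\}$ for $H_{k,t}$ and $p^c\cdot\#\{x:x^{p^{d/4}+1}\in S\}$ for $H_{c,d/4c}$; these are equal by the displayed identity. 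Since each curve also has a single point at infinity, we conclude $\#H_{k,t}(\F_{p^d})=\#H_{c,d/4c}(\F_{p^d})$.

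The step I expect to be the main obstacle is the dichotomy $kt\equiv\pm d/4\pmod d$ produced by the valuation count: the case $kt\equiv d/4$ is transparent, but the case $kt\equiv 3d/4$ must be brought back to $d/4$ via the Frobenius-twist identity above, together with the (elementary but necessary) observation that a bijective change of variable preserves fibre cardinalities over an arbitrary target set. The rest---the $2$-adic bookkeeping that yields $4c\mid d$ and $kt\bmod d\in\{d/4,3d/4\}$, and the verification that $k/c$ is odd and $d/c$ even so that Lemma~\ref{prop-y^p+y} applies---is routine.
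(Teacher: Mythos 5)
Your proposal is correct and follows essentially the same route as the paper's proof: reduce $x^{p^{kt}+1}$ to $x^{p^{d/4}+1}$ or $x^{p^{3d/4}+1}$ over $\F_{p^d}$, handle the second case by the Frobenius substitution $x\mapsto x^{p^{3d/4}}$ (the paper's Case B uses $x\mapsto x^{p^{d/4}}$, which is the same trick), and apply Lemma \ref{prop-y^p+y} to replace $y^{p^k}+y$ by $z^{p^c}+z$. Your version is in fact slightly more careful than the paper's in making explicit both the $2$-adic bookkeeping and the fact that both $y$-maps are $p^c$-to-one onto the common image, which is what turns the set equality of Lemma \ref{prop-y^p+y} into an equality of point counts.
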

\begin{proof}
	Write $d=4e$ where $e$ is an integer and $k=2^uv$ where $u$ is a non-negative integer and $v$ is odd. There exists and odd $f$ such that $kt=e\cdot f$ and $f \mid t$. Since $f$ is odd, there exists an integer $g$ such that $e=4g+1$ or $e=4g-1$.
	
	\textbf{Case A. $\mathbf{e=4g+1.}$} We have $$u^{p^{kt}}=u^{p^{e\cdot f}}=u^{p^{e\cdot(4g+1)}}=u^{p^{4e\cdot g+ e}}=u^{p^{e}}$$ for all $u \in \mathbb F_{p^{4e}}$. Hence we have $$y^{p^k}+y-x^{p^{kt}+1}=y^{p^k}+y-x^{p^e+1}$$ for all $x,y \in \mathbb F_{p^{4e}}$. Since $d/c$ is even and $k/c$ is odd, by Proposition \ref{prop-y^p+y} we have  $$y^{p^k}+y-x^{p^{kt}+1}=y^{p^c}+y-x^{p^e+1}$$ for all $x,y \in \mathbb F_{p^{4e}}$. Therefore, we have $\#H_{k,t}(\mathbb F_{p^{d}})=\#H_{c,d/4c}(\F_{p^{d}})$.

		\textbf{Case B. $\mathbf{e=4g-1.}$} We have	$$u^{p^{kt}}=u^{p^{e\cdot f}}=u^{p^{e\cdot(4g-1)}}=u^{p^{4e\cdot (g-1)+3e}}=u^{p^{3e}}$$ for all $u \in \mathbb F_{p^{4e}}$. Hence we have $$y^{p^k}+y-x^{p^{kt}+1}=y^{p^k}+y-x^{p^{3e}+1}$$ for all $x,y \in \mathbb F_{p^{4e}}$.  Since $d/c$ is even and $k/c$ is odd, by Proposition \ref{prop-y^p+y} we have  $$y^{p^k}+y-x^{p^{kt}+1}=y^{p^c}+y-x^{p^{3e}+1}$$ for all $x,y \in \mathbb F_{p^{4e}}$.  Since $(x^{p^{3e}+1})^{p^e}=x^{p^{e}+1}$ for all $x\in F_{p^{4e}}$ and the map $(x,y) \to (x^{p^e},y)$ from $F_{p^{4e}}$ to $F_{p^{4e}}$ is one-to-one and onto, we have $\#H_{k,t}(\mathbb F_{p^{d}})=\#H_{c,4d/c}(\F_{p^{d}})$. 
\end{proof}

Now we can prove Thorem \ref{thm-Hkt-point} for odd primes.

\begin{proof}[Proof of Theorem \ref{thm-Hkt-point} where $p$ is odd]
	It follows by Lemma \ref{lem-divides-k}, \ref{lem-divides-2k}, \ref{lem-divides-4k} and Theorem \ref{reduction-thm}.
\end{proof}

\section{The Number of $\mathbb F_{p^n}$-Rational Points of $H_{k,t}$ where $p=2$}\label{sectHk-even}
This section is similar to the previous section. We  will clarify what the differences are and prove Theorem \ref{thm-Hkt-point} for $p=2$.

\begin{lemma}\label{lem-p=2-x^2}
	Let $k$ and $n$ be integers. The cardinality of the set $\{(x,y) \in \F_{2^n}^2 \: : \:   y^{p^k}+y=x^2\}$ is $p^n$.
\end{lemma}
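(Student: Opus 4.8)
The whole point is that in characteristic $2$ the Frobenius $x\mapsto x^2$ is a bijection of $\mathbb{F}_{2^n}$, so every element is a square in exactly one way. The plan is therefore to count the solution set by projecting onto the $y$-coordinate: for each fixed $y\in\mathbb{F}_{2^n}$ the quantity $c:=y^{2^k}+y$ is a well-defined element of $\mathbb{F}_{2^n}$ (here $y\mapsto y^{2^k}$ is just an iterate of Frobenius), and the equation $x^2=c$ has a unique solution, namely $x=c^{2^{n-1}}$, since $x\mapsto x^2$ is a field automorphism of $\mathbb{F}_{2^n}$.

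\textbf{Key step.} Define $\pi:\{(x,y)\in\mathbb{F}_{2^n}^2 : y^{2^k}+y=x^2\}\to\mathbb{F}_{2^n}$ by $\pi(x,y)=y$. By the previous paragraph $\pi$ is a bijection: it is injective because $x$ is determined by $y$ via $x=(y^{2^k}+y)^{2^{n-1}}$, and it is surjective because for any $y$ this $x$ does satisfy $x^2=y^{2^k}+y$. Hence the cardinality of the set equals $\#\mathbb{F}_{2^n}=2^n$.

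\textbf{Obstacle.} There is essentially nothing hard here; the only thing to be careful to state explicitly is that squaring is surjective (equivalently injective) on a finite field of characteristic $2$, which is immediate since $X^2=c$ has the root $c^{2^{n-1}}$ and a degree-two polynomial $X^2-c=(X-c^{2^{n-1}})^2$ cannot have two distinct roots. Everything else is bookkeeping, and the statement (and its use later) does not depend on the size of the image of the additive map $y\mapsto y^{2^k}+y$, only on the fact that we are free to choose $y$ arbitrarily and then $x$ is forced.
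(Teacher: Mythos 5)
Your proof is correct, and it is genuinely more direct than the one in the paper. Both arguments hinge on the same key fact -- that $x\mapsto x^2$ is a field automorphism of $\F_{2^n}$ -- but they deploy it differently. The paper uses it to replace the equation $y^{2^k}+y=x^2$ by $y^{2^k}+y=x$, then invokes Lemma \ref{prop-y^p-y} to reduce the exponent $k$ to $d=(n,k)$, and finally counts by observing that the image of $y\mapsto y^{2^d}+y$ is the kernel of $\Tr_{\F_{2^n}/\F_{2^d}}$ (size $2^{n-d}$) with each fibre of size $2^d$, giving $2^d\cdot 2^{n-d}=2^n$. You instead observe that the solution set is precisely the graph of the map $y\mapsto (y^{2^k}+y)^{2^{n-1}}$, so the projection onto the $y$-coordinate is a bijection and no analysis of the image of the additive map is needed. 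Your route avoids Lemma \ref{prop-y^p-y} and the trace computation entirely; the paper's route is arguably there to parallel the structure of the odd-characteristic arguments in Section \ref{sectHk0}, where the fibre/image decomposition genuinely matters, but for this particular lemma your shortcut is complete and loses nothing. (Both proofs, like the paper's remark following the lemma, are consistent with the observation that $H_{k,0}$ has genus $0$ in characteristic $2$.)
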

\begin{proof}
	Since the map $x\mapsto x^2$ is an automorphism on $\mathbb F_q$ where $q$ is even, we have $$|\{(x,y) \in \F_{2^n}^2 \: : \:   y^{p^k}+y=x^2\}|=|\{(x,y) \in \F_{2^n}^2 \: : \:   y^{p^k}+y=x\}|.$$ Moreover, by Proposition \ref{prop-y^p-y}, we have $$|\{(x,y) \in \F_{2^n}^2 \: : \:   y^{p^k}+y=x\}|=|\{(x,y) \in \F_{2^n}^2 \: : \:  y^{p^d}+y=x\}|$$ where $d=(n,k)$. This cardinality basically equals to $$p^d \cdot |\{x \in \F_{2^n} \: : \:  \Tr_{\F_{2^{n}}/\F_{2^d}}(x)=0\}|=p^k\cdot p^{n-k}=p^n.$$  This finishes the proof.
\end{proof}

Note that this result in Lemma \ref{lem-p=2-x^2} is also follows by the fact that the genus of $H_{k,0}$ is $0$.
\begin{lemma}
	Let $k$ and $t$ be integers. Then $H_{k,t}$ is maximal over $\F_{2^{2kt}}$ and minimal over $\F_{2^{4kt}}$.
\end{lemma}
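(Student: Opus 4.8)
The plan is to follow the template of Lemmas~\ref{lemma-Hk-max-min-odd} and~\ref{lemma-Hk-max-min-even}, but exploiting the simplification that in characteristic $2$ the Artin--Schreier map $y\mapsto y^{2^k}+y$ coincides with $y\mapsto y^{2^k}-y$, so no twist by an element $\mu$ with $\mu^{2^k}=-\mu$ is needed. Since $k\mid 2kt$ we have $\F_{2^k}\subseteq\F_{2^{2kt}}$, and the map $y\mapsto y^{2^k}+y$ is an $\F_2$-linear surjection from $\F_{2^{2kt}}$ onto $\{z\in\F_{2^{2kt}}:\Tr_{\F_{2^{2kt}}/\F_{2^k}}(z)=0\}$ with fibres of size $2^k$. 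Hence, exactly as in~\eqref{quadpts},
$$\#H_{k,t}(\F_{2^{2kt}})=2^{k}N+1,\qquad N=\Bigl|\{x\in\F_{2^{2kt}}:\Tr_{\F_{2^{2kt}}/\F_{2^k}}(x^{2^{kt}+1})=0\}\Bigr|.$$

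The key step is to evaluate this trace by factoring it through the intermediate field $\F_{2^{kt}}$: writing $\Tr_{\F_{2^{2kt}}/\F_{2^k}}=\Tr_{\F_{2^{kt}}/\F_{2^k}}\circ\Tr_{\F_{2^{2kt}}/\F_{2^{kt}}}$ and using $x^{2^{2kt}}=x$, we get
$$\Tr_{\F_{2^{2kt}}/\F_{2^{kt}}}(x^{2^{kt}+1})=x^{2^{kt}+1}+\bigl(x^{2^{kt}+1}\bigr)^{2^{kt}}=x^{2^{kt}+1}+x^{2^{kt}+1}=0$$
for every $x\in\F_{2^{2kt}}$, the two equal terms cancelling in characteristic $2$. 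So the inner trace vanishes identically, $N=2^{2kt}$, and $\#H_{k,t}(\F_{2^{2kt}})=2^{2kt+k}+1$. Since the genus of $H_{k,t}$ is $g=2^{kt}(2^{k}-1)/2$, the Hasse--Weil upper bound over $\F_{2^{2kt}}$ is $2^{2kt}+1+2g\cdot 2^{kt}=2^{2kt}+1+2^{2kt}(2^{k}-1)=2^{2kt+k}+1$, which is precisely the count just obtained; hence $H_{k,t}$ is maximal over $\F_{2^{2kt}}$. Minimality over $\F_{2^{4kt}}$ is then immediate from Proposition~\ref{minimal-prop}(1) applied with $q=2^{2kt}$ and $n=2$.

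I do not expect a genuine obstacle: this is essentially the computation behind Lemmas~\ref{lemma-Hk-max-min-odd}--\ref{lemma-Hk-max-min-even}, with the trivial identity $2x^{2^{kt}+1}=0$ playing the role that the parity hypothesis on $t$ played in the odd-characteristic case (there it was needed to make $(\mu+\mu^{p^{kt}})x^{p^{kt}+1}$ vanish). The only matters worth a sentence are the inclusion $\F_{2^k}\subseteq\F_{2^{2kt}}$ needed to invoke~\eqref{quadpts}, and --- if one reads the hypothesis literally as allowing $t=0$ --- the degenerate case $t=0$, where $H_{k,0}$ has genus $0$ and the statement is vacuous, which should simply be excluded.
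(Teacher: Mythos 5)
Your proposal is correct and takes essentially the same route as the paper, which simply observes that in characteristic $2$ no twisting element $\mu$ is needed (since $2x=0$ replaces the condition $\mu+\mu^{p^{kt}}=0$) and then repeats the trace computation of Lemma \ref{lemma-Hk-max-min-odd}; your cancellation $x^{2^{kt}+1}+\bigl(x^{2^{kt}+1}\bigr)^{2^{kt}}=0$ is exactly that computation. Your write-up is in fact more explicit than the paper's one-line proof, and your remarks on the degenerate case $t=0$ and on invoking \eqref{quadpts} are sound.
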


\begin{proof}
We want to note that every root of $x^{2^k}+x$ is in $\F_{2^k}$ and $2x=0$ for all such $x$ since we are in characteristic $2$. Now the proof is similar to the proof of Theorem \ref{lemma-Hk-max-min-odd}.
\end{proof}

\begin{lemma}\label{lem-all-p=2} The followings hold.\begin{itemize}
	\item[1.] 		Let $d\mid kt$. The number of $\mathbb F_{2^d}$-rational points of $H_{k,t}$ equals to $H_{k,0}$.
		
	\item[2.]	Let $d\mid kt$ with $2d \nmid kt$ and let $(k,d)=c$. The number of $\mathbb F_{2^{2d}}$-rational points of $H_{k,t}$ equals to he number of $\mathbb F_{2^{2d}}$-rational points of $H_{c,d/c}$.
			
	\item[3.]	Let $d\mid 4kt$ with $d \nmid 2kt$ and let $(k,d)=c$. The number of $\mathbb F_{2^{d}}$-rational points of $H_{k,t}$ equals to he number of $\mathbb F_{2^{d}}$-rational points of $H_{c,d/4c}$.
\end{itemize}
\end{lemma}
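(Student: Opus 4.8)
The plan is to prove all three parts exactly as their odd-characteristic counterparts Lemmas~\ref{lem-divides-k}, \ref{lem-divides-2k} and \ref{lem-divides-4k} were proved, changing only the one ingredient that was special to odd $p$: wherever Section~\ref{sectHk} invoked Lemma~\ref{prop-y^p+y} to pass from $y^{p^k}+y$ to $y^{p^c}+y$, I would instead invoke Lemma~\ref{prop-y^p-y}. This substitution is legitimate in characteristic $2$ because there $y^{2^k}+y=y^{2^k}-y$, so the image of $y\mapsto y^{2^k}+y$ on $\F_{2^n}$ is exactly the set $S_k$ of Lemma~\ref{prop-y^p-y}, which equals $S_{(n,k)}$ \emph{with no parity hypotheses at all}; in the odd case Lemma~\ref{prop-y^p+y} required $k/d$ odd and $n/d$ even. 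That is precisely the ``difference'' promised in the opening sentence of the section.

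For Part~1 I would argue as in Lemma~\ref{lem-divides-k}: if $d\mid kt$ then $u^{2^{kt}}=u$ for every $u\in\F_{2^d}$, so over $\F_{2^d}$ the defining relation $y^{2^k}+y=x^{2^{kt}+1}$ is literally $y^{2^k}+y=x^2$, the relation defining $H_{k,0}$; hence the point counts over $\F_{2^d}$ coincide.

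For Parts~2 and~3 I would proceed in two steps, mirroring Lemmas~\ref{lem-divides-2k} and~\ref{lem-divides-4k}. First, an exponent reduction: over the relevant field $\F_{2^N}$ (with $N=2d$ in Part~2, $N=d$ in Part~3) one has $x^{2^{kt}}=x^{2^a}$ for all $x$, by the same elementary congruence computation as in Section~\ref{sectHk}, where $a=d$ in Part~2 (using that $kt/d$ is odd, which is forced by $d\mid kt$ and $2d\nmid kt$) and $a=e:=d/4$ in Part~3 (using that $4kt/d$ is odd, forced by $d\mid 4kt$ and $d\nmid 2kt$, and then splitting into $4kt/d\equiv 1$ and $4kt/d\equiv -1\pmod 4$, the latter cleaned up afterwards by the Frobenius twist $(x,y)\mapsto(x^{2^e},y)$ exactly as in Case~B of Lemma~\ref{lem-divides-4k}). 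The curve thus becomes $y^{2^k}+y=x^{2^a+1}$ over $\F_{2^N}$. Second, I would apply Lemma~\ref{prop-y^p-y} with $n=N$: since $(N,k)=c$ — automatic when $N=d$, and requiring $k/c$ odd (which again follows from the hypotheses of Part~2, checked by writing $k=ck_0$, $d=cd_0$ with $(k_0,d_0)=1$) when $N=2d$ — the set $\{y^{2^k}+y:y\in\F_{2^N}\}$ equals $\{z^{2^c}+z:z\in\F_{2^N}\}$. Both $y\mapsto y^{2^k}+y$ and $z\mapsto z^{2^c}+z$ are $\F_2$-linear on $\F_{2^N}$ with kernels of the same size $2^c$, so replacing one by the other preserves, for each fixed $x$, the number of solutions; this identifies the count with that of $z^{2^c}+z=x^{2^a+1}$, i.e.\ of $H_{c,a/c}$ over $\F_{2^N}$ (one also checks $c\mid a$, so that $a/c$ is a legitimate second index and $c\cdot(a/c)=a$ makes the exponent match).

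The routine work is entirely congruence bookkeeping — verifying $x^{2^{kt}}=x^{2^a}$, verifying $(N,k)=c$, and verifying $c\mid a$ — and it is word-for-word the bookkeeping already carried out for odd $p$. The only genuinely new (and very short) point, hence the only place really needing care, is the replacement of Lemma~\ref{prop-y^p+y} by Lemma~\ref{prop-y^p-y} together with the remark that this makes the parity restrictions evaporate; I would expect the write-up to consist largely of pointers to the odd-$p$ lemmas with this one modification flagged, as the section's first sentence anticipates.
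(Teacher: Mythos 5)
Your proposal matches the paper's proof, which simply states that the three parts are proved exactly as Lemmas \ref{lem-divides-k}, \ref{lem-divides-2k} and \ref{lem-divides-4k}; the one substantive modification you identify (using Lemma \ref{prop-y^p-y} in place of Lemma \ref{prop-y^p+y}, with the parity hypotheses becoming vacuous since $y^{2^k}+y=y^{2^k}-y$) is precisely the difference the paper itself flags in the remark preceding Lemma \ref{prop-y^p+y}. Your version is in fact more explicit about the congruence bookkeeping than the paper's one-line proof, but the route is the same.
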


\begin{proof}
The proofs are respectively same as the proofs of Lemmas \ref{lem-divides-k}, \ref{lem-divides-2k} and \ref{lem-divides-4k}. 
\end{proof}

Now we can prove Thorem \ref{thm-Hkt-point} for $p=2$.

\begin{proof}[Proof of Theorem \ref{thm-Hkt-point} where $p=2$]
	It follows by Lemma \ref{lem-all-p=2} and Theorem \ref{reduction-thm}.
\end{proof}

\section{Remarks on Maps between the Curves $H_{k,t}$}

The Hermitian curve $H_k$ is well known to be a maximal curve over $\mathbb F_{p^{2k}}$. We also proved this fact in Lemma \ref{lemma-Hk-max-min-odd}.

It is important to find the covering relationships between our curves $H_{k,t}$ and the well known Hermitian curves $H_k$. In the following propositions we will give the coverings of $H_{k,t}$ and show that each $H_{k,t}$ covers and is covered by some Hermitian curves.

\begin{prop}
Let $k$ be an integer and $t$ be an odd integer.	Then we have the following map $$\begin{matrix} H_{k,t} &\to &H_{k}\\
(x,y) &\mapsto &(x^{(p^{kt}+1)/(p^k+1)},y).
\end{matrix}$$
\end{prop}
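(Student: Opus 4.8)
The plan is to exhibit the displayed assignment as an honest morphism of smooth projective curves, and then to check it is compatible with the two defining equations so that it qualifies as a covering. First I would record the arithmetic fact that makes the exponent meaningful: since $t$ is odd, $p^k+1$ divides $(p^k)^t+1=p^{kt}+1$, so
$$m:=\frac{p^{kt}+1}{p^k+1}$$
is a positive integer (explicitly $m=p^{k(t-1)}-p^{k(t-2)}+\cdots-p^k+1$). Set $\varphi(x,y)=(x^m,y)$.

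Next I would verify compatibility with the equations on the affine charts. Suppose $(x,y)$ lies on $H_{k,t}$, i.e.\ $y^{p^k}+y=x^{p^{kt}+1}$. Then $(X,Y):=(x^m,y)$ satisfies
$$Y^{p^k}+Y=y^{p^k}+y=x^{p^{kt}+1}=x^{m(p^k+1)}=(x^m)^{p^k+1}=X^{p^k+1},$$
so $(X,Y)$ lies on $H_k$. Hence $\varphi$ is a well-defined rational map $H_{k,t}\dashrightarrow H_k$ over $\mathbb F_p$; equivalently, on function fields it is the $\mathbb F_p$-embedding $\mathbb F_p(H_k)\hookrightarrow \mathbb F_p(H_{k,t})$ sending the coordinate functions $x\mapsto x^m$ and $y\mapsto y$, which is well defined precisely because the above computation carries the relation $y^{p^k}+y=x^{p^k+1}$ to a valid identity in $\mathbb F_p(H_{k,t})$.

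Finally I would promote this to a surjective morphism: since $H_{k,t}$ is a smooth projective curve, $\varphi$ extends to a morphism on all of $H_{k,t}$ (a rational map from a smooth curve to a projective variety is a morphism), and it is non-constant because its first coordinate $x^m$ is a non-constant function on $H_{k,t}$; a non-constant morphism from an irreducible projective curve onto an irreducible projective curve is surjective. This yields the asserted covering $H_{k,t}\to H_k$, and combining it with the Kleiman--Serre theorem (Theorem~\ref{chapman:KleimanSerre}) gives $L_{H_k}(T)\mid L_{H_{k,t}}(T)$ over $\mathbb F_p$. I do not expect a real obstacle here: the verification on affine points is a one-line substitution, the extension over the points at infinity is handled formally by properness of $H_{k,t}$ rather than by a local computation, and the single genuine hypothesis used is that $t$ is odd, which is exactly what makes $m$ an integer.
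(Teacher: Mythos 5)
Your proposal is correct and is precisely the ``simple verification'' the paper invokes without writing out: the oddness of $t$ makes $m=(p^{kt}+1)/(p^k+1)$ an integer, and the substitution $(x,y)\mapsto(x^m,y)$ carries the relation $y^{p^k}+y=x^{p^{kt}+1}$ to $Y^{p^k}+Y=X^{p^k+1}$. The additional remarks about extending the rational map to a morphism of smooth projective curves and its surjectivity are standard and consistent with how the paper uses such maps via the Kleiman--Serre theorem.
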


\begin{prop}
	Let $k$ be an integer and $t$ be an odd integer. Then we have  the following map $$\begin{matrix} H_{kt} &\to &H_{k,t}\\ 
	(x,y) &\mapsto &(x,\sum\limits_{i=0}^{t-1}(-1)^iy^{p^{ki}}).
	\end{matrix}$$
\end{prop}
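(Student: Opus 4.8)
The plan is to check directly that the displayed assignment is a morphism of curves defined over $\mathbb{F}_p$ and that it is surjective, after which Theorem \ref{chapman:KleimanSerre} yields the divisibility $L_{H_{k,t}}(T)\mid L_{H_{kt}}(T)$. Recall that $H_{kt}=H_{kt,1}$ has affine model $y^{p^{kt}}+y=x^{p^{kt}+1}$ while $H_{k,t}$ has affine model $y^{p^k}+y=x^{p^{kt}+1}$, the two right-hand sides being literally the same polynomial in $x$. So, writing $F$ for the $p^k$-power map and
$$z=\varphi(y):=\sum_{i=0}^{t-1}(-1)^i y^{p^{ki}},$$
the entire content of the proposition is the polynomial identity $z^{p^k}+z=y^{p^{kt}}+y$.

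First I would verify this identity by a direct telescoping computation. Since $p^k$-power is additive in characteristic $p$ and $(-1)^{p^k}=-1$, one has $z^{p^k}=\sum_{i=0}^{t-1}(-1)^i y^{p^{k(i+1)}}=\sum_{j=1}^{t}(-1)^{j-1}y^{p^{kj}}$, so in $z^{p^k}+z$ each term $y^{p^{km}}$ with $1\le m\le t-1$ occurs with coefficient $(-1)^{m-1}+(-1)^m=0$, leaving only the $i=0$ term $y$ coming from $z$ and the $j=t$ term $(-1)^{t-1}y^{p^{kt}}$ coming from $z^{p^k}$. Because $t$ is odd, $(-1)^{t-1}=1$, hence $z^{p^k}+z=y^{p^{kt}}+y$, which equals $x^{p^{kt}+1}$ on $H_{kt}$. (Conceptually, $\varphi$ is the additive polynomial attached to $\sum_{i=0}^{t-1}(-1)^iF^i=(F^t+1)/(F+1)$, the factorization $X^t+1=(X+1)(X^{t-1}-X^{t-2}+\cdots+1)$ being available precisely because $t$ is odd; this is exactly where the hypothesis on $t$ is used.) Thus $(x,y)\mapsto(x,\varphi(y))$ carries the affine model of $H_{kt}$ into that of $H_{k,t}$, and since $\varphi$ has coefficients $\pm 1$ the map is defined over the prime field $\mathbb{F}_p$.

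Finally I would pass to the smooth projective models: $H_{kt}$ and $H_{k,t}$ are smooth projective and absolutely irreducible, so the rational map just constructed extends uniquely to a morphism $H_{kt}\to H_{k,t}$; it is nonconstant since it preserves the nonconstant coordinate $x$, and a nonconstant morphism of smooth projective curves is automatically surjective, so $H_{k,t}$ is covered by $H_{kt}$. There is essentially no real obstacle here beyond the bookkeeping in the telescoping sum; the only point requiring attention is the parity of $t$, which is what makes the $i=0$ and $j=t$ terms combine to reproduce $y^{p^{kt}}+y$ rather than $y-y^{p^{kt}}$.
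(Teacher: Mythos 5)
Your proof is correct and is precisely the ``simple verification'' that the paper invokes without writing out: the telescoping computation of $z^{p^k}+z$ with $z=\sum_{i=0}^{t-1}(-1)^i y^{p^{ki}}$, using that $t$ is odd so the boundary terms yield $y^{p^{kt}}+y=x^{p^{kt}+1}$. Your added remarks on definedness over $\mathbb F_p$, extension to the smooth projective models, and surjectivity are standard and sound, so there is nothing to correct.
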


\begin{prop}
 Let $p=2$ and let $k$ and $t$ be integers. Then we have the following map $$\begin{matrix} H_{kt} &\to &H_{k,t}\\ 
	(x,y) &\mapsto &(x,\sum\limits_{i=0}^{t-1}y^{p^{ki}}).
	\end{matrix}$$
\end{prop}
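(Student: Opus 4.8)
The plan is to check three things: that the displayed formula defines a rational map from $H_{kt}$ to $H_{k,t}$ (\ie that it carries points of the first curve to points of the second), that this map is defined over $\F_p=\F_2$, and that it is surjective. Everything but routine bookkeeping lies in the first point, which is a one-line telescoping identity made possible by $p=2$.

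First I would verify well-definedness on the affine models. Let $(x,y)$ be a point of $H_{kt}$, so $y^{p^{kt}}+y=x^{p^{kt}+1}$, and put $z=\sum_{i=0}^{t-1}y^{p^{ki}}$. Raising to the $p^k$-th power shifts the summation index by one, so
$$z^{p^k}+z=\sum_{i=1}^{t}y^{p^{ki}}+\sum_{i=0}^{t-1}y^{p^{ki}}=y^{p^{kt}}+y=x^{p^{kt}+1},$$
where the overlapping terms $i=1,\dots,t-1$ cancel because $a+a=0$ in characteristic $p=2$. Hence $(x,z)$ lies on $H_{k,t}$, and $(x,y)\mapsto(x,z)$ is a morphism of affine curves. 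This is exactly the $p=2$ counterpart of the previous proposition: the alternating sum $\sum(-1)^iy^{p^{ki}}$ there becomes a plain sum here because $-1=1$ in $\F_2$.

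It remains to pass to the complete smooth models and to record surjectivity. A rational map from a smooth projective curve to a projective variety extends to an everywhere-defined morphism, so the formula above gives a morphism $H_{kt}\to H_{k,t}$ of smooth projective curves; it commutes with the two $x$-coordinate maps to $\mathbb P^1$, and since $x$ is non-constant on $H_{kt}$ the morphism is non-constant, hence finite and surjective. All exponents $p^{ki}$ are powers of the Frobenius of $\F_p$ and the remaining coefficients lie in $\F_p$, so the morphism is defined over $\F_p=\F_2$; by Theorem~\ref{chapman:KleimanSerre} this yields $L_{H_{k,t}}(T)\mid L_{H_{kt}}(T)$ over $\F_p$, the intended application. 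There is no real obstacle; the only thing to keep an eye on is that $p=2$ is genuinely used, since in odd characteristic the overlapping terms would survive --- precisely why the sign $(-1)^i$ is needed in the odd-characteristic version. One could also avoid the general extension theorem by checking the map at the unique point at infinity of each curve, but that is more effort than the statement warrants.
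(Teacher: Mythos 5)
Your verification is correct and is exactly the ``simple verification'' the paper alludes to: the index shift $z\mapsto z^{p^k}$ telescopes the sum and the middle terms cancel in characteristic $2$, leaving $z^{p^k}+z=y^{p^{kt}}+y=x^{p^{kt}+1}$. The paper omits the details entirely, so your write-up (including the remarks on extension to the smooth model and surjectivity) is the same approach, just carried out explicitly.
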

The proofs are simple verification.

\section{Remarks on Maximal Curves}
Assume $t$ is odd if $p$ is odd.  Since $H_{k,t}$ is a maximal curve over $\mathbb F_{q^{2kt}}$, any curve covered by $H_{k,t}$ is a  maximal curve over $\mathbb F_{q^{2kt}}$. This follows easily from Theorem \ref{chapman:KleimanSerre}. This fact allows us to state the following theorems.

\begin{prop}
Let $k$ and $t$ be positive integers and assume $t$ is odd if $p$ is odd. Let $m\ge 2$ be a positive integer dividing  $p^{kt}+1$. Then the curve $$C: y^{p^k}+y=x^m$$ is a maximal curve over $\mathbb F_{q^{2kt}}$.
\end{prop}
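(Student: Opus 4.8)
The plan is to exhibit $C$ as a quotient of the maximal curve $H_{k,t}$ and then invoke Kleiman--Serre, exactly as announced in the paragraph preceding the proposition. Since $m \mid p^{kt}+1$, the integer $\ell := (p^{kt}+1)/m$ is a positive integer, and I claim the assignment $(x,y) \mapsto (x^{\ell}, y)$ defines a surjective morphism $\varphi : H_{k,t} \to C$ over $\mathbb F_p$. First I would check well-definedness on the affine models: if $y^{p^k}+y = x^{p^{kt}+1}$, then $(x^{\ell})^m = x^{\ell m} = x^{p^{kt}+1} = y^{p^k}+y$, so $(x^\ell,y)$ lies on $C$. As $\varphi$ is given by polynomials with coefficients in $\mathbb F_p$ and is plainly non-constant, it extends uniquely to a morphism of the complete nonsingular models; a non-constant morphism between smooth projective curves is surjective, and the image of an absolutely irreducible curve is absolutely irreducible, so $C$ is indeed a curve covered by $H_{k,t}$ over $\mathbb F_p$ in the sense of Section \ref{morec}.

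Next I would record the maximality of the source. Under the stated hypothesis (``$t$ odd if $p$ is odd''), $H_{k,t}$ is maximal over $\mathbb F_{p^{2kt}}$: for odd $p$ this is Lemma \ref{lemma-Hk-max-min-odd}, and for $p=2$ it is the corresponding lemma in Section \ref{sectHk-even}. Maximality means that over $\mathbb F_{p^{2kt}}$ the reciprocal $L$-polynomial of $H_{k,t}$ is $(1+p^{kt}T)^{2g}$, where $g$ is the genus of $H_{k,t}$; equivalently, every Weil number of $H_{k,t}/\mathbb F_{p^{2kt}}$ equals $-\sqrt{p^{2kt}}=-p^{kt}$.

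The divisibility step then finishes it. The morphism $\varphi$ is defined over $\mathbb F_p$, hence a fortiori over $\mathbb F_{p^{2kt}}$, so Theorem \ref{chapman:KleimanSerre} gives $L_{C/\mathbb F_{p^{2kt}}}(T) \mid L_{H_{k,t}/\mathbb F_{p^{2kt}}}(T) = (1+p^{kt}T)^{2g}$. Therefore every root of the reciprocal $L$-polynomial of $C/\mathbb F_{p^{2kt}}$ equals $-p^{kt}$, i.e. $L_{C/\mathbb F_{p^{2kt}}}(T) = (1+p^{kt}T)^{2g_C}$ with $g_C$ the genus of $C$, which by the characterization recalled in Section \ref{morec} says precisely that $C$ is maximal over $\mathbb F_{p^{2kt}}$. (If $m=1$ the curve is rational and the assertion is vacuous, which is why one takes $m\ge 2$.)

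I expect the only point requiring any care to be the verification that $(x,y)\mapsto(x^\ell,y)$ genuinely extends to a surjective morphism of the complete nonsingular models — in particular that nothing pathological happens over the point(s) at infinity of $C$ and $H_{k,t}$ — but this is routine function-field bookkeeping. Once the covering is in place, maximality of $C$ is an immediate consequence of the already-established maximality of $H_{k,t}$ together with Kleiman--Serre, so there is essentially no further obstacle.
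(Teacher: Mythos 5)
Your proposal is correct and is essentially the paper's own argument: the paper's (implicit) proof is exactly that $C$ is covered by $H_{k,t}$ via $(x,y)\mapsto(x^{(p^{kt}+1)/m},y)$, so maximality of $H_{k,t}$ over $\mathbb F_{p^{2kt}}$ plus Theorem \ref{chapman:KleimanSerre} forces all Weil numbers of $C$ to equal $-p^{kt}$. You have simply spelled out the details the paper leaves to the reader.
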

If we apply the method in the proof of Lemma \ref{lemma-Hk-max-min-odd}, we have the following result.

\begin{prop}
Let $k$ and $t$ be positive integers and assume $t$ is odd if $p$ is odd.  Let $m\ge 2$ be a positive integer dividing  $p^{kt}+1$. Let $\mu$ be a nonzero root of $x^q+x=0$. Then the curve $$C: y^{p^k}-y=\mu x^m$$ is a maximal curve over $\mathbb F_{q^{2kt}}$.
\end{prop}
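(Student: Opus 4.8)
The plan is to reduce the claim to the preceding proposition by a change of variables of exactly the type used in the proof of Lemma~\ref{lemma-Hk-max-min-odd}, rather than recompute a point count from scratch. Here $q=p^k$, so the hypothesis on $\mu$ reads $\mu\ne 0$ and $\mu^{p^k}=-\mu$; hence $\mu\in\F_{p^{2k}}$ when $p$ is odd and $\mu\in\F_{p^k}$ when $p=2$, and in either case $\mu\in\F_{q^{2kt}}$, since $\F_{p^{2k}}\subseteq\F_{q^{2kt}}$. Also $p\nmid m$, because $m\mid p^{kt}+1$.

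First I would bring in the curve $C'\colon y^{p^k}+y=x^m$ of the preceding proposition, which is already known to be maximal over $\F_{q^{2kt}}$, together with the map $\varphi\colon(x,y)\mapsto(x,-\mu y)$. Since $\mu\ne 0$, $\varphi$ is a bijection on $\F_{q^{2kt}}$-points, and I would check it is an isomorphism of the affine curves over $\F_{q^{2kt}}$: writing $Y=-\mu y$, one has $(-\mu)^{p^k}=\mu$ in both characteristics (for $p$ odd because $(-1)^{p^k}=-1$ and $\mu^{p^k}=-\mu$; for $p=2$ because $-\mu=\mu\in\F_{2^k}$), so $Y^{p^k}=\mu y^{p^k}$ and therefore
\[
Y^{p^k}-Y=\mu y^{p^k}+\mu y=\mu\bigl(y^{p^k}+y\bigr),
\]
which equals $\mu x^m$ precisely when $(x,y)\in C'$; the argument is reversible. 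Since $\varphi$ is an isomorphism of affine curves over $\F_{q^{2kt}}$, the complete smooth models of $C$ and $C'$ are isomorphic over $\F_{q^{2kt}}$, so they have the same $L$-polynomial over $\F_{q^{2kt}}$, and since $C'$ is maximal over $\F_{q^{2kt}}$ by the preceding proposition, so is $C$.

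As an alternative — the most economical phrasing — I would instead exhibit a covering of $C$ by $H_{k,t}$: with $\ell=(p^{kt}+1)/m$, one checks by the same computation together with $(u^{\ell})^m=u^{p^{kt}+1}$ that $(u,v)\mapsto(u^{\ell},-\mu v)$ is a surjective morphism $H_{k,t}\to C$ defined over $\F_{q^{2kt}}$. Then $C$ is maximal over $\F_{q^{2kt}}$ because it is covered by $H_{k,t}$, which is maximal over $\F_{q^{2kt}}$ (Lemma~\ref{lemma-Hk-max-min-odd} in odd characteristic, and its characteristic-$2$ counterpart), and a curve covered by a maximal curve is again maximal by Theorem~\ref{chapman:KleimanSerre}.

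I do not expect a genuine obstacle. The only points needing care are the Frobenius twist in the substitution — that is, the uniform identity $(-\mu)^{p^k}=\mu$, which is exactly where $\mu^{p^k}=-\mu$ enters — and checking that $\mu$, hence $\varphi$ (or the covering map), is defined over $\F_{q^{2kt}}$, for which one needs $\mu\in\F_{p^{2k}}\subseteq\F_{q^{2kt}}$. Were one to imitate Lemma~\ref{lemma-Hk-max-min-odd} literally and recompute the count, the work would reduce to evaluating $\#\{x\in\F_{q^{2kt}}:\Tr_{\F_{q^{2kt}}/\F_q}(\mu x^m)=0\}$, which is no longer immediate once $m<p^{kt}+1$; the change of variables avoids this altogether.
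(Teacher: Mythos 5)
Your first route --- the substitution $(x,y)\mapsto(x,-\mu y)$, using $(-\mu)^{p^k}=\mu$ to identify $C$ over $\F_{q^{2kt}}$ with the curve $y^{p^k}+y=x^m$ of the preceding proposition --- is exactly what the paper means by ``applying the method in the proof of Lemma~\ref{lemma-Hk-max-min-odd}'', so your argument is essentially the paper's. Your alternative (covering $C$ by $H_{k,t}$ via $(u,v)\mapsto(u^{(p^{kt}+1)/m},-\mu v)$ and citing Theorem~\ref{chapman:KleimanSerre}) is equally valid, and your remark that a literal recomputation of the trace count would not go through for $m<p^{kt}+1$ is a correct caution.
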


\section{Divisibility Property of the Curves $H_k$}
In this section, we will interest in $L$-polynomials of Hermitian curves and prove the Theorems \ref{Hkdiv} and \ref{Hknotdiv}. Similar proofs can be given for the curves $H_{k,t}$.
\begin{lemma}\label{divisibility-lemma-un} Let $k$ be a positive integer. 
	For $n\geq 1$ define 
	$$U_n=-p^{-n/2}[\#H_{2k}(\mathbb F_{p^n})-\#H_{k}(\mathbb F_{p^n})]$$and 
	write $U_n$ as a linear combination  
	of the $8k$-th roots of unity as
	$$U_n=\sum_{j=0}^{8k-1}u_jw_{s}^{-jn}.$$ 
	Then we have $$u_n \ge 0$$ for all $n \in \{ 0,1,\cdots, 8k-1\}$. 
\end{lemma}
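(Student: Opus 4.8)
The strategy is to turn the inequality into a finite, completely explicit computation: use Corollary~\ref{cor-Hk-point} to write $U_n$ in closed form, note that $U_n$ has period $8k$, recover the coefficients $u_j$ by Fourier inversion, and verify $u_j\ge 0$ by a case analysis.

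\textbf{Step 1 (explicit formula for $U_n$).} Write $k=2^vw$ with $w$ odd. Corollary~\ref{cor-Hk-point} expresses $-p^{-n/2}\big[\#H_k(\mathbb F_{p^n})-(p^n+1)\big]$ purely in terms of $(n,4k)$, and the same corollary with $k$ replaced by $2k=2^{v+1}w$ expresses $-p^{-n/2}\big[\#H_{2k}(\mathbb F_{p^n})-(p^n+1)\big]$ in terms of $(n,8k)$. Subtracting, the $p^n+1$ terms cancel, so
\[
U_n=-p^{-n/2}\big[\#H_{2k}(\mathbb F_{p^n})-(p^n+1)\big]+p^{-n/2}\big[\#H_{k}(\mathbb F_{p^n})-(p^n+1)\big]
\]
becomes an explicit function of $n$ depending only on $n\bmod 8k$: it vanishes unless $2^{v+1}\mid n$, and on the complementary set it splits into the three subcases $2^{v+1}\parallel n$, $2^{v+2}\parallel n$, $2^{v+3}\mid n$, in each of which $U_n$ is a short $\mathbb Z$-combination of powers of $p$ whose exponents are read off from $(n,8k)$. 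Periodicity is what makes the representation $U_n=\sum_{j=0}^{8k-1}u_jw_{8k}^{-jn}$ (with $w_{8k}$ the root $w_s$ of the statement, $s=8k$) legitimate: read for all $n$, this is a Vandermonde system in $U_0,\dots,U_{8k-1}$ and so pins down the $u_j$ uniquely.

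\textbf{Step 2 (Fourier inversion).} By the Inverse Discrete Fourier Transform,
\[
u_n=\frac1{8k}\sum_{j=0}^{8k-1}U_j\,w_{8k}^{jn}.
\]
Group the index $j$ by $e:=(j,8k)$: on each class $\{\,j:(j,8k)=e\,\}$ the value $U_j$ equals a constant $U^{(e)}$ by Step~1, while $\sum_{(j,8k)=e}w_{8k}^{jn}=c_{q}(n)$ with $q=8k/e$ is a Ramanujan sum, i.e.\ the explicit integer $\mu\big(q/(q,n)\big)\,\phi(q)/\phi\big(q/(q,n)\big)$. Hence
\[
u_n=\frac1{8k}\sum_{e\mid 8k}U^{(e)}\,c_{8k/e}(n),
\]
a finite sum of integers; in particular $u_n\in\mathbb Z$. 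Equivalently, since both curves are supersingular, $u_j=m_{2k}(w_{8k}^{-j})-m_k(w_{8k}^{-j})$, the difference of the multiplicities of $w_{8k}^{-j}$ among the $\sqrt{p}$-normalised Frobenius eigenvalues of $H_{2k}$ and of $H_k$ (both multiplicity functions obtained from Corollary~\ref{cor-Hk-point} by the same inversion), so the target is $m_{2k}(\omega)\ge m_k(\omega)$ for every $8k$-th root of unity $\omega$.

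\textbf{Step 3 (non-negativity, the main obstacle).} Substituting the values $U^{(e)}$ and running the Ramanujan-sum bookkeeping, the contributions of nested divisors telescope and $u_n$ collapses to a closed form in which each surviving term has the shape $p^{a}-p^{b}$ with $a\ge b$, times a non-negative integer; hence $u_n\ge0$. The case split is governed by the $2$-part of $n$ relative to $2^v$ and by the odd part of $(n,8k)$, mirroring that of $U_n$. I expect this last step to carry essentially all of the content: the cancellations forcing non-negativity are not formal consequences of the Fourier machinery but reflect the precise numerology of Corollary~\ref{cor-Hk-point} for the Hermitian curves---equivalently, exactly how the eigenvalue multiset of $H_k$ sits inside that of $H_{2k}$ after normalisation---so the real work is verifying, case by case, that the positive contributions dominate. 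Steps~1 and~2 are by contrast routine manipulation of greatest common divisors, Ramanujan sums, and finite geometric series in $p$.
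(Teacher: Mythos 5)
Your Steps 1 and 2 are unobjectionable, and your observation in Step 2 that $u_j=m_{2k}(w_{8k}^{-j})-m_k(w_{8k}^{-j})$, a difference of eigenvalue multiplicities, is exactly the right way to read the lemma. The genuine gap is Step 3: the whole content of the statement is the inequality $u_n\ge 0$, and you do not prove it --- you assert that the Ramanujan-sum bookkeeping ``telescopes'' into terms $p^a-p^b$ with $a\ge b$, while explicitly conceding that ``the real work'' of the case-by-case verification remains to be done. A plan for a computation is not a proof. For comparison, the paper never computes the $u_n$ exactly: it tabulates $U_n$ and then applies the crude triangle-inequality bound $u_n\ge\frac{1}{8k}\bigl(U_0-\sum_{j\neq 0}|U_j|\bigr)$, checking that the single dominant value $U_0=p^{2k}(p^{2k}-1)-p^k(p^k-1)$ outweighs the few remaining terms, each of size at most $p^k(p^k-1)$.

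Worse, the verification you defer would fail. Carrying out your Step 1 honestly from Corollary \ref{cor-Hk-point}: when $2^{v+2}\mid\mid n$ one has $2^{v+2}\mid (n,4k)$ but $2^{v+2}\mid\mid (n,8k)$, so
$$U_n=-p^{(n,2k)}\left(p^{(n,2k)}-1\right)-p^{(n,k)}\left(p^{(n,k)}-1\right)<0,$$
not $0$ as the table in the paper's proof records (concretely, for $k=1$, $n=4$: $H_1$ is minimal and $H_2$ maximal over $\F_{p^4}$, giving $U_4=-p^2(p^2-1)-p(p-1)$). In eigenvalue terms this is forced: the normalised Weil numbers of $H_k$ satisfy $\zeta^{2k}=-1$ while those of $H_{2k}$ satisfy $\zeta^{4k}=-1$, and these two sets of roots of unity are disjoint, so $u_j=-m_k(w_{8k}^{-j})<0$ for every $j$ with $w_{8k}^{-j}$ a normalised Weil number of $H_k$. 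Hence no amount of bookkeeping in your Step 3 can yield $u_n\ge 0$: the statement itself (and with it Corollary \ref{divisibility-2k}) does not survive this check, and the paper's own argument rests on the same erroneous table entry ($U_n=0$ when $2^{v+2}\mid\mid n$) that your deferred computation would have exposed.
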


\begin{proof}
	Let $U_0=U_s$.
	Write $k=2^vz$ where $v$ is a positive integer and $z$ is an odd integer.  
	By Corollary \ref{cor-Hk-point} we have $$U_n=\begin{cases}
	0 &\text{if } 2^{v+1} \nmid n, \\
	p^{(n,k)}(p^{(n,k)}-1) &\text{if } 2^{v+1} \mid \mid n,\\
	0 &\text{if } 2^{v+2} \mid \mid n,\\
		p^{(n,2k)}(p^{(n,2k)}-1)-	p^{(n,k)}(p^{(n,k)}-1) &\text{if } 2^{v+3} \mid n.
	\end{cases}$$ 
Therefore, by Inverse Fourier Transform we have that	
	\begin{align*}
	u_{n}&=\frac{1}{8k}\sum_{j=0}^{8k-1}U_jw_{8k}^{jn}\\
	&=\frac{1}{8k}\left[\sum_{j=0}^{2z-1}U_{2^{v+1}(2j+1)}w_{4z}^{jn} +\sum_{j=0}^{z-1}U_{2^{v+3}j}w_{2z}^{jn} \right] \quad \textrm{because
		$U_n=0$ if $2^{v+1} \nmid n$ or  $2^{v+2} \mid \mid n$}\\
	&\ge \frac{1}{8k}\left(U_0- \sum_{j=1}^{x-1}|U_{2^{v+3}j}| -\sum_{j=1}^{2z-1}|U_{2^{v+1}(2j+1)}| \right) \quad \textrm{by the
		triangle inequality}\\
	&\ge \frac{1}{8k}\left[p^{2k}(p^{2k}-1)-3zp^k(p^k-1)\right]  \quad \textrm{because
		$U_s=U_0=p^{2k}(p^{2k}-1)-p^k(p^k-1)$}\\
	& \qquad \qquad \qquad \qquad \qquad \qquad\qquad \qquad \qquad \qquad \textrm{    and all others are $\le p^k(p^k-1)$} \\
	&= \frac{1}{8k}p^k(p^k-1)(p^k(p^k+1)-3z)\\&\ge 0. 
	\end{align*}
	This finishes the proof.
\end{proof}

We write $L(H_k)$ for $L_{H_k}$.	

\begin{cor}\label{divisibility-2k}
	Let $k$ be a positive integer. Then $$L(H_k) \mid L(H_{2k}).$$
\end{cor}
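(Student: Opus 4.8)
The plan is to read off from Lemma~\ref{divisibility-lemma-un} that the multiset of Weil numbers of $H_k$ is contained in that of $H_{2k}$, and then conclude by Gauss's lemma. First I would collect the needed supersingularity input. Both $H_k$ and $H_{2k}$ are maximal over a finite extension of $\F_p$, hence supersingular (Subsection~\ref{supsing}); moreover, by Lemma~\ref{lemma-Hk-max-min-odd} and its $p=2$ analogue, $H_k$ is maximal over $\F_{p^{2k}}$ and $H_{2k}$ is maximal over $\F_{p^{4k}}$. A short computation with \eqref{eqn-sum of roots} then shows that if $\sqrt p\,\zeta_1,\dots,\sqrt p\,\zeta_{2g_k}$ are the Weil numbers of $H_k$ over $\F_p$ and $\sqrt p\,\xi_1,\dots,\sqrt p\,\xi_{2g_{2k}}$ those of $H_{2k}$, then every $\zeta_i$ and every $\xi_j$ is an $8k$-th root of unity. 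For $0\le m<8k$ write $d_m$ for the number of $i$ with $\zeta_i=w_{8k}^m$ and $c_m$ for the number of $j$ with $\xi_j=w_{8k}^m$; thus $d_m$ and $c_m$ are the multiplicities of $\sqrt p\,w_{8k}^m$ as a Weil number of $H_k$ and of $H_{2k}$ respectively.

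Next I would use \eqref{eqn-sum of roots} to rewrite
$$U_n=-p^{-n/2}\bigl[\#H_{2k}(\F_{p^n})-\#H_k(\F_{p^n})\bigr]=\sum_{j=1}^{2g_{2k}}\xi_j^{\,n}-\sum_{i=1}^{2g_k}\zeta_i^{\,n}=\sum_{m=0}^{8k-1}(c_m-d_m)\,w_{8k}^{mn}.$$
Hence $(U_n)_n$ has period $8k$, and the last expression is an expansion of it in $8k$-th roots of unity. Comparing with the expansion $U_n=\sum_{j=0}^{8k-1}u_j w_{8k}^{-jn}$ of Lemma~\ref{divisibility-lemma-un} and invoking the uniqueness of the coefficients in the Inverse Discrete Fourier Transform, I get $u_j=c_{(8k-j)\bmod 8k}-d_{(8k-j)\bmod 8k}$ for every $j$. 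Since Lemma~\ref{divisibility-lemma-un} gives $u_j\ge 0$ for all $j$, this forces $c_m\ge d_m$ for all $m$: each Weil number of $H_k$ is a Weil number of $H_{2k}$ with at least the same multiplicity.

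Finally, since $L(H_k)(T)=\prod_i(1-\sqrt p\,\zeta_i T)$ and $L(H_{2k})(T)=\prod_j(1-\sqrt p\,\xi_j T)$, the inclusion just obtained gives $L(H_k)\mid L(H_{2k})$ in $\mathbb C[T]$; because $L(H_k)$ has constant term $1$ it is primitive in $\mathbb Z[T]$, so by Gauss's lemma the divisibility holds already in $\mathbb Z[T]$. The step that needs the most care is the passage through the Fourier transform: one has to know in advance that \emph{all} Weil numbers of the two curves are $\sqrt p$ times $8k$-th roots of unity, so that the expansion appearing in Lemma~\ref{divisibility-lemma-un} really is, coefficient by coefficient, the one coming from the Weil numbers --- and this is exactly what the supersingularity and maximality statements above supply. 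Everything else is bookkeeping.
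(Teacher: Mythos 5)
Your proposal is correct and follows essentially the same route as the paper: the paper's proof is just the one-line observation that Lemma~\ref{divisibility-lemma-un} bounds the multiplicity of each root of $L(H_k)$ by its multiplicity in $L(H_{2k})$, and your write-up simply makes explicit the supporting facts (all Weil numbers are $\sqrt p$ times $8k$-th roots of unity by supersingularity and maximality over $\F_{p^{2k}}$, resp.\ $\F_{p^{4k}}$, the identification $u_j=c_{-j}-d_{-j}$ via uniqueness of the inverse DFT, and Gauss's lemma to land in $\mathbb Z[T]$). Nothing is missing; your version is just a fully expanded form of the paper's argument.
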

\begin{proof}
	Lemma \ref{divisibility-lemma-un} shows that the multiplicity of each root of $L(H_k)$ is smaller than or equal to 
	its multiplicity as a root of $L(H_{2k})$.
\end{proof}

We do not know if there is a map from $H_{2k}$ to $H_k$.

\begin{lemma}\label{divisiblity-odd}
	Let $k$ be an integer and $t$ be an odd integer. Then $$L(H_k)\mid L(H_{kt}).$$
\end{lemma}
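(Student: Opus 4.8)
The plan is to produce an explicit surjective morphism of curves $H_{kt}\to H_k$ defined over $\F_p$ and then apply the Kleiman--Serre theorem (Theorem \ref{chapman:KleimanSerre}). Such a morphism is already at hand as a composition of the two maps recorded in Section 7, both of which require exactly the hypothesis that $t$ is odd: first $\phi\colon H_{kt}\to H_{k,t}$ given by $(x,y)\mapsto\big(x,\sum_{i=0}^{t-1}(-1)^i y^{p^{ki}}\big)$, and then $\psi\colon H_{k,t}\to H_k$ given by $(x,y)\mapsto\big(x^{(p^{kt}+1)/(p^k+1)},y\big)$, which makes sense because $p^k+1$ divides $p^{kt}+1$ when $t$ is odd. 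Both maps are defined by expressions with coefficients in $\F_p$, so the composite $\psi\circ\phi\colon H_{kt}\to H_k$ is a morphism of the smooth projective models defined over $\F_p$; being non-constant, it is surjective.

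First I would confirm (or simply cite from Section 7) that $\phi$ and $\psi$ land on the claimed curves. For $\psi$ this is immediate from $\big(x^{(p^{kt}+1)/(p^k+1)}\big)^{p^k+1}=x^{p^{kt}+1}$. For $\phi$ one checks the telescoping identity $z^{p^k}+z=y+y^{p^{kt}}$ with $z=\sum_{i=0}^{t-1}(-1)^i y^{p^{ki}}$: the intermediate powers of $y$ cancel in pairs and the surviving end term is $(-1)^{t-1}y^{p^{kt}}=y^{p^{kt}}$ because $t-1$ is even; using the defining equation $y^{p^{kt}}+y=x^{p^{kt}+1}$ of $H_{kt}$ this equals $x^{p^{kt}+1}$, which is the right-hand side of the equation of $H_{k,t}$. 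Then Theorem \ref{chapman:KleimanSerre} applied to $\psi\circ\phi$ yields $L(H_k)\mid L(H_{kt})$, as required.

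I do not expect a genuine obstacle here: the argument reduces to the routine verification that the Section 7 maps are well defined, and the only point demanding care is that both the divisibility $p^k+1\mid p^{kt}+1$ and the telescoping cancellation genuinely need $t$ odd, which is precisely the hypothesis of the lemma. As a self-contained alternative that avoids maps entirely, one could instead imitate the Fourier-transform argument of Lemma \ref{divisibility-lemma-un}: put $V_n=-p^{-n/2}[\#H_{kt}(\F_{p^n})-\#H_k(\F_{p^n})]$, expand $V_n$ over roots of unity of the appropriate order with the help of Corollary \ref{cor-Hk-point}, and show every Fourier coefficient is nonnegative, so that each root of $L(H_k)$ appears in $L(H_{kt})$ with at least its multiplicity; but the composition-of-morphisms route is shorter and more conceptual, so that is the one I would write up.
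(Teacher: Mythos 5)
Your proof is correct and is essentially the paper's own argument: the composite $\psi\circ\phi$ you build from the Section 7 maps is exactly the single map $(x,y)\mapsto\bigl(x^{(p^{kt}+1)/(p^k+1)},\sum_{i=0}^{t-1}(-1)^iy^{p^{ik}}\bigr)$ that the paper writes down directly before invoking Theorem \ref{chapman:KleimanSerre}. Your telescoping verification and the remark that both steps need $t$ odd are accurate but not a departure from the paper's route.
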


\begin{proof}
	Since $t$ is odd, $p^k+1$ divides $p^{kt}+1$ and therefore there is a map 
	of curves $H_{kt} \longrightarrow H_k$ given by
	$$(x,y)\to \left(x^{(p^{kt}+1)/(p^k+1)},\sum_{i=0}^{l-1}(-1)^iy^{p^{ik}}\right).$$ Hence 
	$L(X_k) \mid L(X_{kt})$ by Theorem \ref{chapman:KleimanSerre}.
\end{proof}

\begin{thm}\label{Hkdiv}
	Let $k$ and $m$ be positive integers. Then $$L(H_k) \mid L(H_{km}).$$
\end{thm}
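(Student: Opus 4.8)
The plan is to reduce the general statement to the two special cases that have already been proved — doubling the subscript (Corollary \ref{divisibility-2k}) and multiplying the subscript by an odd integer (Lemma \ref{divisiblity-odd}) — and then to chain them together using the fact that divisibility of polynomials in $\mathbb Z[T]$ is transitive.

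First I would factor $m = 2^{a}u$ with $a \ge 0$ and $u$ odd. Applying Corollary \ref{divisibility-2k} with $k$ replaced successively by $k, 2k, 4k, \dots, 2^{a-1}k$ produces the chain
\[
L(H_k) \mid L(H_{2k}) \mid L(H_{4k}) \mid \cdots \mid L(H_{2^{a}k}),
\]
so that $L(H_k) \mid L(H_{2^{a}k})$; when $a = 0$ the chain is empty and this is the trivial equality $L(H_k) = L(H_k)$. Next I would apply Lemma \ref{divisiblity-odd} with the role of ``$k$'' played by the positive integer $2^{a}k$ and the role of the odd parameter ``$t$'' played by $u$, obtaining $L(H_{2^{a}k}) \mid L(H_{2^{a}ku}) = L(H_{km})$. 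Composing the two divisibilities gives $L(H_k) \mid L(H_{km})$.

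There is essentially no obstacle: all the substance lives in Corollary \ref{divisibility-2k} (whose proof uses the point-count of Corollary \ref{cor-Hk-point} together with the Fourier-transform positivity of Lemma \ref{divisibility-lemma-un}) and in Lemma \ref{divisiblity-odd} (which uses an explicit morphism of curves and Theorem \ref{chapman:KleimanSerre}). The only care needed is in the degenerate cases — $m$ odd (so $a=0$, and only Lemma \ref{divisiblity-odd} is used) and $m$ a power of $2$ (so $u=1$, and only the chain from Corollary \ref{divisibility-2k} is used) — each of which is handled by allowing the corresponding chain above to be empty.
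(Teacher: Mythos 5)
Your proposal is correct and is essentially the paper's own argument: both decompose $m$ into its $2$-part and odd part and chain Corollary \ref{divisibility-2k} with Lemma \ref{divisiblity-odd} via transitivity of divisibility. The only (immaterial) difference is the order — the paper first passes from $k$ to $kt$ with the odd-multiple lemma and then doubles repeatedly, while you double first and then apply the odd-multiple lemma.
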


\begin{proof}
	If $m=1$, then the result is trivial. Assume $m\ge 2$ and write $m=2^st$ where $t$ is odd. Since $t$ is odd, by Lemma \ref{divisiblity-odd} we have $$L(H_k)\mid L(H_{kt})$$ and by Corollary  \ref{divisibility-2k} we have $$L(H_{2^{i-1}kt})\mid L(H_{2^{i}kt})$$ for all $i\in\{1,\cdots,s\}$. Hence $$L(H_k) \mid L(H_{km}).$$
\end{proof}

\begin{thm}\label{Hknotdiv}
	Let $k$ and $\ell $ be positive integers such that $k$ does not divide $\ell$. 
	Then $L(H_k)$ does not divide $L(H_{\ell })$.
\end{thm}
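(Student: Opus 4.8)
The plan is to argue by contradiction using the period–divisibility criterion of Proposition~\ref{divisiblity-period}: assume $k\nmid\ell$ but $L(H_k)\mid L(H_\ell)$, and derive the contradiction $k\mid\ell$. The two ingredients are (i) that Hermitian curves are supersingular, so Proposition~\ref{divisiblity-period} applies, and (ii) an exact value for the period $s_{H_m}$ of $H_m$, which I will read off from Corollary~\ref{cor-Hk-point}.

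First I would record that $H_m$ is supersingular for every $m\ge 1$. Writing $m=2^vw$ with $w$ odd and taking $n=4m$ in Corollary~\ref{cor-Hk-point}, we have $(4m,4m)=4m$ and $2^{v+2}\mid 4m$, so the corollary gives $-p^{-2m}[\#H_m(\mathbb F_{p^{4m}})-(p^{4m}+1)]=p^{m}(p^{m}-1)=2g$, where $g=p^m(p^m-1)/2$ is the genus of $H_m$. Hence $\#H_m(\mathbb F_{p^{4m}})=p^{4m}+1-2g\sqrt{p^{4m}}$, i.e.\ $H_m$ is minimal over $\mathbb F_{p^{4m}}$ and therefore supersingular (this also follows from Lemma~\ref{lemma-Hk-max-min-odd} and its $p=2$ analogue). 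In particular $H_k$ and $H_\ell$ are supersingular, so Proposition~\ref{divisiblity-period} yields $s_{H_k}\mid s_{H_\ell}$.

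The heart of the proof is then to show that $s_{H_m}=4m$ for every $m$. The previous paragraph shows $H_m$ is minimal over $\mathbb F_{p^{4m}}$, so $s_{H_m}$ divides $4m$; moreover $H_m$ is minimal over $\mathbb F_{p^{s_{H_m}}}$. So it suffices to check that among the divisors $j$ of $4m$, the only one over which $H_m$ is minimal is $j=4m$. For such a $j$ we have $(j,4m)=j$, and Corollary~\ref{cor-Hk-point} says that $-p^{-j/2}[\#H_m(\mathbb F_{p^j})-(p^j+1)]$ is one of $0$, $-p^{j/2}(p^{j/2}-1)$, or $p^{j/4}(p^{j/4}-1)$, according to the three cases there. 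Minimality of $H_m$ over $\mathbb F_{p^j}$ is the statement that this quantity equals $2g=p^m(p^m-1)>0$; the first value is $0$ and the second is negative, while the third equals $p^m(p^m-1)$ only when $j/4=m$ (by strict monotonicity of $x\mapsto p^x(p^x-1)$). Hence $s_{H_m}=4m$.

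Putting the pieces together, $4k=s_{H_k}\mid s_{H_\ell}=4\ell$, so $k\mid\ell$, contradicting the hypothesis; therefore $L(H_k)\nmid L(H_\ell)$. The only substantive step is the sharpness of the period in the third paragraph, and Corollary~\ref{cor-Hk-point} makes that immediate, so I anticipate no real obstacle. (If instead one takes "period'' to mean the least $j$ at which the Hasse--Weil bound is attained in either direction, the identical computation gives $s_{H_m}=2m$, and dividing $2k\mid 2\ell$ by $2$ still gives $k\mid\ell$, so the argument is unaffected.)
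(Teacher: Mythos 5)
Your proof is correct and takes essentially the same route as the paper: both deduce the result from Proposition~\ref{divisiblity-period} together with the fact that the period of $H_m$ is $4m$, so that $k\nmid\ell$ forces $s_{H_k}=4k\nmid 4\ell=s_{H_\ell}$. The only difference is that you explicitly verify the period computation from Corollary~\ref{cor-Hk-point} (checking that $4m$ is the unique divisor of $4m$ over which $H_m$ is minimal), a step the paper asserts without proof.
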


\begin{proof}
	The period of $H_k$ is $4k$ and the period of $H_\ell$ is $4\ell$. Since $4k$ does not divide $4\ell$, we have $L(H_k)$ does not divide $L(H_\ell)$ by Proposition \ref{divisiblity-period}. 
\end{proof}

\section*{Acknowledgements}
We thank Gary McGuire for helpful conversations.


\begin{thebibliography}{99}

\bibitem{lidl} R. Lidl, H. Niederreiter, Finite Fields, Addison-Wesley, 1983.

\bibitem{MY} G. McGuire, E. S. Y{\i}lmaz,
Divisibility of L-Polynomials for a Family of Artin-Schreier Curves, preprint,
arXiv:1803.03511

\bibitem{MY2} G. McGuire, E. S. Y{\i}lmaz, On the Zeta Functions of Supersingular Curves, preprint, arXiv:1803.03509v3

\end{thebibliography}
\end{document}